\documentclass[sn-mathphys,Numbered]{sn-jnl}%

\usepackage{tikz-cd}
\usepackage{graphicx}%
\usepackage{multirow}%
\usepackage{amsmath,amssymb,amsfonts}%
\usepackage{amsthm}%
\usepackage{mathrsfs}%
\usepackage[title]{appendix}%
\usepackage{xcolor}%
\usepackage{textcomp}%
\usepackage{manyfoot}%
\usepackage{booktabs}%
\usepackage{algorithm}%
\usepackage{algorithmicx}%
\usepackage{algpseudocode}%
\usepackage{listings}%
\usepackage{float}%

\theoremstyle{thmstyleone}%
\newtheorem{theorem}{Theorem}%  meant for 

\newtheorem{corollary}[theorem]{Corollary}

\newtheorem{lemma}[theorem]{Lemma}

\theoremstyle{thmstyletwo}%
\newtheorem{remark}{Remark}%

\theoremstyle{thmstylethree}%
\newtheorem{definition}{Definition}%

\raggedbottom

\usepackage{natbib}
\begin{document}

\title[Article Title]{A Construction of Permutation Polynomials Using R\'{e}dei Function in Even Characteristic}

\author*[1]{\fnm{Daniel Panario} }
\author*[1]{\fnm{Nihal Uyar}}
\author*[1]{\fnm{Qiang Wang}}

\affil*[1]{\orgdiv{School of Mathematics and Statistics}, \orgname{Carleton University}, \orgaddress{\street{1125 Colonel By Dr}, \city{Ottawa}, \postcode{K1S 5B6}, \state{ON}, \country{Canada}}}

\abstract{The R\'{e}dei function defined over a field of even characteristic has been introduced by N\"{o}bauer in 1986.
In this paper, inspired by the work of Fu et al.  in odd characteristic, employing the AGW criterion, we present a recursive construction of permutation polynomials in even characteristic using the R\'{e}dei function over a field of characteristic 2.}

\keywords{Permutation polynomials, even characteristic, AGW criterion, R\'{e}dei function}

\maketitle

\section{Introduction}\label{sec1}

Let $q$ be a power of a prime number, $\mathbb{F}_q$ be the finite field with $q$ elements and $\mathbb{F}_q [x]$ be the ring of polynomials over $\mathbb{F}_q$.
It is well known that every function over $\mathbb{F}_q$ can be expressed as a polynomial over $\mathbb{F}_q$ and, an important class of polynomials is the one formed by bijections in $\mathbb{F}_q$.
A permutation polynomial $f \in \mathbb{F}_q[x]$ is a bijection in $\mathbb{F}_q$ into itself.
Studies on permutation polynomials have started with Hermite and Dickson \cite{dickson, hermite} and it is still an active area of research with several applications in cryptography \cite{crypto1,crypto2}, coding theory \cite{cod1}, combinatorial designs \cite{combdis} and on many other areas of mathematics and engineering.
For more information on permutation polynomials over finite fields, \cite{survey, surv, handbook}, and references there in provide an excellent survey.

Construction of permutation polynomials over a finite field of either odd characteristic or of even characteristic is an interesting hot topic.
For some constructions of permutation polynomials over finite fields of odd characteristic, the reader is referred to \cite{agw, odd1, odd2, odd3, wang, odd4, odd5, agw2}.
Some classes of permutation polynomials in a field of even characteristic presented in \cite{even1, even2,even3,even4, even5}.
More than a decade ago, Akbary, Ghioca and Wang introduced a useful criterion to study permutation polynomial on finite sets.\\

\begin{theorem}\cite{agw}\label{agw}
    Let $A$, $S$ and $\Bar{S}$ be finite sets with $|S|= |\Bar{S}|$.
    Let $f, \Bar{f}, \lambda, \Bar{\lambda}$ be maps on finite sets such that $f: A \rightarrow A$, $\Bar{f}: S \rightarrow \Bar{S}$, $\lambda: A \rightarrow S$, $\Bar{\lambda}: A \rightarrow \Bar{S}$ and $\Bar{\lambda} \circ f = \Bar{f} \circ \lambda$ :

\begin{center}
\begin{tikzcd}
A \arrow{d}{\lambda}  \arrow{r}{f}  &  A \arrow{d}{\Bar{\lambda}} \\
S \arrow{r}{\Bar{f}}  &  \Bar{S} \\
\end{tikzcd}
\end{center}

    If $\lambda$ and $\Bar{\lambda}$ are surjective, then the following are equivalent:
    \begin{itemize}
        \item $f$ is a bijection from $A$ to itself;
        \item $\Bar{f}$ is a bijection from $S$ to $\Bar{S}$ and $f$ is injective on $\lambda^{-1}(s)$ for each $s \in S$.
    \end{itemize}
\end{theorem}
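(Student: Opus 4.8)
The plan is to prove the two implications separately, using throughout the elementary fact that a map between finite sets of equal cardinality is a bijection as soon as it is injective (equivalently, surjective), together with the commutativity relation $\Bar{\lambda} \circ f = \Bar{f} \circ \lambda$ read off the square.

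First I would treat the direction ``$f$ bijective $\Rightarrow$ $\Bar{f}$ bijective and $f$ injective on each fibre''. Injectivity of $f$ on $\lambda^{-1}(s)$ is immediate, since $f$ is injective on all of $A$. To see that $\Bar{f}$ is a bijection from $S$ to $\Bar{S}$, since $|S|=|\Bar{S}|<\infty$ it suffices to show $\Bar{f}$ is surjective: given $\Bar{s}\in\Bar{S}$, surjectivity of $\Bar{\lambda}$ gives $a\in A$ with $\Bar{\lambda}(a)=\Bar{s}$, and surjectivity of $f$ gives $b\in A$ with $f(b)=a$; then $\Bar{s}=\Bar{\lambda}(f(b))=\Bar{f}(\lambda(b))$, so $\Bar{s}$ lies in the image of $\Bar{f}$. (Equivalently, one can check $\Bar{f}$ is injective by the same diagram chase; either way the equal-cardinality hypothesis upgrades this to a bijection.)

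For the converse, assume $\Bar{f}\colon S\to\Bar{S}$ is a bijection and $f$ is injective on each $\lambda^{-1}(s)$; I want $f$ to be a bijection of $A$. The key structural observation is that $f$ carries the $\lambda$-fibre over $s$ into the $\Bar{\lambda}$-fibre over $\Bar{f}(s)$: if $\lambda(a)=s$ then $\Bar{\lambda}(f(a))=\Bar{f}(\lambda(a))=\Bar{f}(s)$, i.e. $f(\lambda^{-1}(s))\subseteq \Bar{\lambda}^{-1}(\Bar{f}(s))$. Now $A=\bigsqcup_{s\in S}\lambda^{-1}(s)$ and $A=\bigsqcup_{\Bar{s}\in\Bar{S}}\Bar{\lambda}^{-1}(\Bar{s})$ are two partitions of $A$ by the surjectivity of $\lambda$ and $\Bar{\lambda}$. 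Because $\Bar{f}$ is a bijection, as $s$ runs over $S$ the sets $\Bar{\lambda}^{-1}(\Bar{f}(s))$ run over all the $\Bar{\lambda}$-fibres, pairwise disjoint; hence the sets $f(\lambda^{-1}(s))$ are pairwise disjoint as well. Combining this with the injectivity of $f$ on each fibre, $|f(A)|=\sum_{s\in S}|f(\lambda^{-1}(s))|=\sum_{s\in S}|\lambda^{-1}(s)|=|A|$, so $f$ is surjective and therefore bijective.

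A shorter route to the converse avoids the counting: if $f(a)=f(b)$ then $\Bar{f}(\lambda(a))=\Bar{\lambda}(f(a))=\Bar{\lambda}(f(b))=\Bar{f}(\lambda(b))$, so injectivity of $\Bar{f}$ forces $\lambda(a)=\lambda(b)=:s$, whence $a,b\in\lambda^{-1}(s)$ and injectivity of $f$ on $\lambda^{-1}(s)$ gives $a=b$; finiteness of $A$ then promotes this injective $f$ to a bijection. I anticipate no real obstacle here; the one point demanding care is the bookkeeping with $|S|=|\Bar{S}|$, which is precisely what lets one pass from ``injective'' (or ``surjective'') to ``bijective'' for $\Bar{f}$ and what makes the fibre count in the converse balance — I would state this finite-set principle explicitly at the outset so both implications can invoke it cleanly.
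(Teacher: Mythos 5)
Your proposal is correct: both implications are established by a valid diagram chase, and the short injectivity argument for the converse (using injectivity of $\Bar{f}$ to force $\lambda(a)=\lambda(b)$ and then injectivity of $f$ on that fibre) together with finiteness of $A$ settles the matter. Note that the paper itself does not prove this statement but quotes it from the cited work of Akbary, Ghioca and Wang, where the argument is essentially the one you give (commutativity of the square plus the finite-set principle and fibre bookkeeping), so your proof matches the standard one.
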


This theorem can be used to derive the following criterion which has been broadly used to obtain permutation polynomials by considering the set $A= \mathbb{F}_q^*$ and the maps $\lambda =\Bar{\lambda}= x^s$ where $q-1= ds$ for some positive integers $s,d$ and a prime power $q$.\\

\begin{corollary}\cite{lee, w, agw3}
    Let $q-1 =ds$ where $d$ and $s$ are two positive integers and $q$ is a prime power.
    Then $$p(x)= x^rf(x^s)$$ is a permutation polynomial over $\mathbb{F}_q$ if and only if $\gcd(r,s)=1$ and $x^rf(x)^s$ permutes the set of $d$th roots of unity $\mu_d$.\\
    \end{corollary}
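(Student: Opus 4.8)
The plan is to invoke the AGW criterion (Theorem~\ref{agw}) with $A=\mathbb{F}_q^{*}$, $S=\bar S=\mu_d$, and $\lambda=\bar\lambda\colon A\to\mu_d$ the $s$-th power map $x\mapsto x^{s}$. Since $\mathbb{F}_q^{*}$ is cyclic of order $q-1=ds$ and $s\mid q-1$, the image of $x\mapsto x^{s}$ is the unique subgroup of order $d$, namely $\mu_d$, so $\lambda$ is surjective. Before applying the criterion one must pass from $\mathbb{F}_q$ to $\mathbb{F}_q^{*}$: in either direction of the equivalence $f$ turns out to be nonzero on $\mu_d$ --- if $x^{r}f(x)^{s}$ permutes $\mu_d$ it cannot take the value $0\notin\mu_d$, and if $p$ permutes $\mathbb{F}_q$ then, since $p(0)=0$ while $p$ would send the whole nonempty fiber $\lambda^{-1}(\zeta)$ to $0$ whenever $f(\zeta)=0$, no such $\zeta$ can exist. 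Hence $f(x^{s})\neq 0$ for all $x\in\mathbb{F}_q^{*}$, so $p$ maps $\mathbb{F}_q^{*}$ into itself, and since $p(0)=0$, $p$ permutes $\mathbb{F}_q$ if and only if $p$ permutes $\mathbb{F}_q^{*}$.

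Next I would check the commuting square with $\bar f\colon\mu_d\to\mu_d$ given by $\bar f(x)=x^{r}f(x)^{s}$. This is a well-defined self-map of $\mu_d$ because for $\zeta\in\mu_d$ one has $\zeta^{rd}=(\zeta^{d})^{r}=1$ and $f(\zeta)^{sd}=f(\zeta)^{q-1}=1$, so $\bar f(\zeta)^{d}=1$. The square commutes since for $x\in\mathbb{F}_q^{*}$
\[
\bar\lambda\bigl(p(x)\bigr)=\bigl(x^{r}f(x^{s})\bigr)^{s}=x^{rs}f(x^{s})^{s}=(x^{s})^{r}f(x^{s})^{s}=\bar f\bigl(\lambda(x)\bigr).
\]
Theorem~\ref{agw} then gives: $p$ permutes $\mathbb{F}_q^{*}$ if and only if $\bar f$ permutes $\mu_d$ and $p$ is injective on $\lambda^{-1}(\zeta)$ for every $\zeta\in\mu_d$.

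It remains to identify the fiber condition with $\gcd(r,s)=1$; this is the only step involving a direct computation. Fix $\zeta\in\mu_d$ and choose $x_0\in\mathbb{F}_q^{*}$ with $x_0^{s}=\zeta$ (possible by surjectivity of $\lambda$); then $\lambda^{-1}(\zeta)=x_0\,\mu_s$, where $\mu_s$ is the group of $s$-th roots of unity, cyclic of order $s$ since $s\mid q-1$. For $\omega\in\mu_s$,
\[
p(x_0\omega)=(x_0\omega)^{r}f\bigl((x_0\omega)^{s}\bigr)=x_0^{r}f(\zeta)\,\omega^{r},
\]
so on this fiber $p$ is the map $\omega\mapsto c\,\omega^{r}$ with the nonzero constant $c=x_0^{r}f(\zeta)$. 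As $\mu_s$ is cyclic of order $s$, this is injective if and only if $\gcd(r,s)=1$, a condition independent of $\zeta$; hence $p$ is injective on one fiber iff it is injective on all of them. Combining with the previous paragraph yields the stated equivalence.

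The main obstacle is not the core AGW argument, which is routine, but rather the bookkeeping around the zero element: one has to establish, separately on each side of the biconditional, that $f$ does not vanish on $\mu_d$, so that $\bar f$ is genuinely a self-map of $\mu_d$ and $p$ restricts to a map $\mathbb{F}_q^{*}\to\mathbb{F}_q^{*}$, before Theorem~\ref{agw} can be applied.
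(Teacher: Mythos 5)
Your proposal is correct and takes essentially the route the paper itself points to: the corollary is quoted from \cite{lee, w, agw3} without proof, the paper noting only that it follows from Theorem~\ref{agw} with $A=\mathbb{F}_q^{*}$ and $\lambda=\bar{\lambda}=x^{s}$, which is exactly the argument you carry out, including the necessary bookkeeping that $f$ cannot vanish on $\mu_d$ and the fiber computation identifying injectivity on $\lambda^{-1}(\zeta)$ with $\gcd(r,s)=1$. The only implicit assumption is $r\geq 1$ (so that $p(0)=0$), which is the standard convention for this statement.
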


A function that we have an interest in this paper is the R\'{e}dei function.
The R\'{e}dei function in odd characteristic was introduced by R\'{e}dei \cite{gen}.
A couple of decades after that, N\"{o}bauer introduced the R\'{e}dei function in even characteristic \cite{even}.
Since then, the R\'{e}dei function has attracted the attention of researchers, in particular due to its permutation properties.
Another related function that we are interested in this article is the tangent-Chebyshev function \cite{lima}.
Trigonometric  functions over finite fields and the Chebyshev function have important applications in cryptography \cite{cry1, cry2,cry3,cry4} and in engineering applications\cite{eng1,eng2,eng3}.
The tangent-Chebyshev function in characteristic 2 defined by means of R\'{e}dei function appears recently \cite{zieve}.

In this article, we employ this corollary to present two new classes of permutation polynomials over a finite field of even characteristic $\mathbb{F}_{q^2}$ where $q=2^t$ for some positive odd integer $t$.
We present a recursive construction of permutation polynomials over $\mathbb{F}_{q^2}$ in even characteristic using the R\'{e}dei function.
We also give  tables of  examples of permutation polynomials in different fields of characteristic $2$, including the exhaustive list for $\mathbb{F}_{2^6}$. 

We give the structure of the paper as follows.
In Section 2, we introduce R\'{e}dei function in a field of characteristic 2 starting with its definition in odd characteristic and mentioning an analogous construction of permutation polynomials in odd characteristic.
In Section 3, we state and prove our results that provides a recursive construction of permutation polynomials in $\mathbb{F}_{q^2}$.
The conclusion is given in Section 4.

\section{R\'{e}dei functions}\label{sec2}

In this section, we define R\'{e}dei function (as well as tangent-Chebyshev function) in $\mathbb{F}_{q^2}$ where $q$ is odd.
Then state the  central result \cite[Theorem 1]{wang} that we adapt to obtain permutation polynomials in a field of even characteristic.

Let $q$ be an odd prime power, $n$ be a positive integer and $\alpha \in \mathbb{F}_{q^2}^*$.
The R\'{e}dei function over $\mathbb{F}_{q^2}$ is defined as follows:
$$R_n(x, \alpha) = \frac{G_n(x, \alpha)}{H_n(x, \alpha)}= \frac{\sum_{i=0}^{\lfloor n/ 2 \rfloor}{{n \choose 2i} \alpha^i x^{n-2i}}}{\sum_{i=0}^{\lfloor n/ 2 \rfloor}{{n \choose 2i +1} \alpha^i x^{n-2i -1}}} .$$

The degree-$n$ tangent-Chebyshev rational function
over $\mathbb{F}_{q^2}$ with a non-square $\alpha \in \mathbb{F}_{q^2}^*$ is defined as follows:
$$C_n(x, \alpha)= \frac{E_n(x, \alpha)}{F_n(x, \alpha)}= \frac{\sum_{i=0}^{\lfloor n/ 2 \rfloor}{{n \choose 2i +1} \alpha^i x^{2i +1}}}{\sum_{i=0}^{\lfloor n/ 2 \rfloor}{{n \choose 2i} \alpha^i x^{2i}}}.$$

Using R\'{e}dei function in odd characteristic defined as above, we have the following construction of permutation polynomials over $\mathbb{F}_{q^2}$. \\

\begin{theorem}\cite{wang}
Let $q$ be an odd prime power.
Suppose $n > 0$ and $m$ are two integers.
Let $\alpha \in \mathbb{F}_{q^2}$ satisfy $\alpha^{q+1} =1$.
Then, the polynomial $$P(x)= x^{n +m(q+1)}H_n(x^{q-1},\alpha)$$
permutes $\mathbb{F}_{q^2}$ if and only if any one of the following conditions holds:
\begin{itemize}
    \item $\mathrm{gcd}(n(n+ 2m), q-1) =1$, when $\sqrt{\alpha} \in \mu_{q+1}$;
    
    \item $\mathrm{gcd}(n +2m, q-1)=1$ and $\mathrm{gcd}(n, q+1) =1$, when $\sqrt{\alpha} \notin \mu_{q+1}$.
\end{itemize}
\end{theorem}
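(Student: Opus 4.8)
The plan is to invoke the Corollary above with the field $\mathbb{F}_{q^2}$ and the factorization $q^2-1=(q-1)(q+1)$, so $s=q-1$ and $d=q+1$. Writing $r=n+m(q+1)$ and $f(y)=H_n(y,\alpha)$, we have $P(x)=x^{r}f(x^{q-1})$, hence $P$ permutes $\mathbb{F}_{q^2}$ if and only if $\gcd(r,q-1)=1$ and the map $g(x):=x^{r}H_n(x,\alpha)^{q-1}$ permutes $\mu_{q+1}$. Since $q+1\equiv 2\pmod{q-1}$, the first requirement is exactly $\gcd(n+2m,q-1)=1$, which is the $m$-dependent part of each claimed condition. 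Everything then reduces to deciding when $g$ permutes $\mu_{q+1}$.

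To analyse $g$ on $\mu_{q+1}$, first note $x^{m(q+1)}=1$ there, so $g(x)=x^{n}\,H_n(x,\alpha)^{q}/H_n(x,\alpha)$ whenever the denominator does not vanish. Applying the $q$-power Frobenius and using $x^{q}=x^{-1}$, $\alpha^{q}=\alpha^{-1}$ for $x,\alpha\in\mu_{q+1}$ gives $H_n(x,\alpha)^{q}=H_n(x^{-1},\alpha^{-1})$. Fixing a square root $\beta\in\mathbb{F}_{q^2}$ of $\alpha$ (it exists, because every element of $\mu_{q+1}$ is a square in $\mathbb{F}_{q^2}^{*}$), the binomial identities $G_n(x,\alpha)=\tfrac12\bigl((x+\beta)^{n}+(x-\beta)^{n}\bigr)$ and $H_n(x,\alpha)=\tfrac1{2\beta}\bigl((x+\beta)^{n}-(x-\beta)^{n}\bigr)$ let me rewrite $H_n(x^{-1},\alpha^{-1})$ and simplify. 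A parity split shows that for $n$ even $g$ is constant on $\mu_{q+1}$, so $P$ is not a permutation, which is consistent with both stated gcd conditions failing when $n$ is even and $q$ is odd; for $n$ odd one obtains $g(x)=\beta^{\,1-n}R_n(x,\alpha)$. As $1-n$ is even, $\beta^{1-n}=\alpha^{(1-n)/2}\in\mu_{q+1}$, so $g$ permutes $\mu_{q+1}$ if and only if $R_n(\cdot,\alpha)$ does.

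It remains to decide when $R_n(\cdot,\alpha)$ permutes $\mu_{q+1}$, and this is where the hypothesis on $\sqrt\alpha$ enters. A direct computation with the M\"obius substitution $\phi(u)=\beta\,\tfrac{u+1}{u-1}$ gives $R_n(\phi(u),\alpha)=\phi(u^{n})$, that is, $R_n$ is conjugate through $\phi$ to the power map $u\mapsto u^{n}$ on $\mathbb{P}^{1}$; hence $R_n$ permutes $\mu_{q+1}$ if and only if $u\mapsto u^{n}$ permutes $\phi^{-1}(\mu_{q+1})$. Evaluating $\phi(u)\,\phi(u)^{q}=1$ and using $\beta^{q}=\beta^{-1}$ when $\sqrt\alpha\in\mu_{q+1}$ versus $\beta^{q}=-\beta^{-1}$ when $\sqrt\alpha\notin\mu_{q+1}$ (the only two options, since $\beta^{q+1}=\pm1$) yields $\phi^{-1}(\mu_{q+1})=\{u:u^{q-1}=-1\}\cup\{0,\infty\}$ in the first case and $\phi^{-1}(\mu_{q+1})=\{u:u^{q+1}=-1\}$ in the second. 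In each case, away from the fixed points $0,\infty$ occurring in the first case, this set is a coset of $\mu_{q-1}$, respectively of $\mu_{q+1}$, on which (for odd $n$) the $n$-th power map acts as a fixed translate of $u'\mapsto u'^{\,n}$, so it is a bijection exactly when $\gcd(n,q-1)=1$, respectively $\gcd(n,q+1)=1$. Combining with $\gcd(n+2m,q-1)=1$ from the first paragraph gives the condition $\gcd(n(n+2m),q-1)=1$ when $\sqrt\alpha\in\mu_{q+1}$, and the pair $\gcd(n+2m,q-1)=1$, $\gcd(n,q+1)=1$ when $\sqrt\alpha\notin\mu_{q+1}$, as claimed.

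I expect the main obstacle to be the bookkeeping around $\beta$ and the point at infinity: confirming that $\phi$ really induces a bijection between the stated $(q+1)$-element subset of $\mathbb{P}^{1}(\mathbb{F}_{q^2})$ and $\mu_{q+1}$ in each case (including the behaviour at $u\in\{0,\infty\}$ and the pole $u=1$), and checking that $H_n(x,\alpha)$ is nonzero throughout $\mu_{q+1}$ precisely when the relevant gcd condition holds, so that $g$ genuinely equals $\beta^{1-n}R_n(x,\alpha)$ there; if instead $H_n$ vanishes somewhere on $\mu_{q+1}$, then $P$ acquires nonzero roots forcing non-injectivity, and one must check the right-hand side of the equivalence also fails, so the two sides stay aligned.
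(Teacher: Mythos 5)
The paper does not actually prove this statement: it is quoted verbatim from Fu--Feng--Lin--Wang \cite{wang}, so there is no in-paper proof to compare against. Your argument is, in outline and in detail, the standard one, and it is also the same strategy the paper deploys for its even-characteristic analogues (Theorems \ref{mainthm} and \ref{corr}): reduce via the corollary of the AGW criterion to the map $g(x)=x^{n+m(q+1)}H_n(x,\alpha)^{q-1}$ on $\mu_{q+1}$, use the Frobenius together with $x^q=x^{-1}$, $\alpha^q=\alpha^{-1}$ to identify $g$ with $\beta^{1-n}R_n(x,\alpha)$ for $n$ odd (and a constant for $n$ even), and then use that $R_n$ is by construction $\rho^{-1}\circ x^n\circ\rho$ (Definition \ref{ziv}) to transport the question to a power map. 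I checked the key computations: $\gcd(n+m(q+1),q-1)=\gcd(n+2m,q-1)$; the identity $g(x)=\beta^{2-n}\bigl((x+\beta)^n-(-1)^n(x-\beta)^n\bigr)/\bigl((x+\beta)^n-(x-\beta)^n\bigr)$ on $\mu_{q+1}$, which gives your dichotomy; and the preimage computations $\phi^{-1}(\mu_{q+1})=\{u:u^{q-1}=-1\}\cup\{0,\infty\}$ when $\beta^{q+1}=1$ and $\phi^{-1}(\mu_{q+1})=\{u:u^{q+1}=-1\}$ when $\beta^{q+1}=-1$ are both correct, with the right cardinality $q+1$ and the right behaviour at $u\in\{0,1,\infty\}$. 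The bookkeeping you flag at the end is real but closes in a line and in the direction you anticipate: if $H_n(x_0,\alpha)=0$ for some $x_0\in\mu_{q+1}$ then $g(x_0)=0$ while $R_n(x_0)=\infty$, so both ``$g$ permutes $\mu_{q+1}$'' and ``$u\mapsto u^n$ permutes $\phi^{-1}(\mu_{q+1})$'' fail simultaneously (since $1\notin\phi^{-1}(\mu_{q+1})$), whereas when the relevant gcd condition holds the conjugated power map is a bijection, which forces $H_n$ to be zero-free on $\mu_{q+1}$ and hence $g=\beta^{1-n}R_n$ there; so the stated equivalence is unaffected and your proof is correct.
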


We observe that the theorem above is also applicable to the polynomial $G_n(x, \alpha)$.
It is easy to show that we also have permutation polynomials in $\mathbb{F}_{q^2}$ using the denominator and the numerator of the tangent-Chebyshev function by adapting the proof of \cite[Theorem 1]{wang} with some simple algebraic manipulations as in \cite{zieve} and use of the following fundamental equality:
\begin{equation}\label{z}
    C_n(x, \alpha)= \frac{1}{x} \circ R_n(x, \alpha) \circ \frac{1}{x}.
\end{equation}

Equation (\ref{z}) implies the polynomial $F_n(x, \alpha)$, respectively $E_n(x, \alpha)$, is equivalent to the polynomial $G_n(x, \alpha)$, respectively $H_n(x, \alpha)$, by the equality $F_n(x, \alpha)= G_n(\frac{1}{x}, \alpha)$.
Therefore, we have the following corollary from \cite[Theorem 1]{wang}.\\

\begin{corollary}
    Let $q$ be an odd prime power, $\alpha \in \mathbb{F}_{q^2}$ with $\alpha^{q+1} =1$ and $n, m$ be positive integers.
    Then,
    $$P(x)= x^{n +m(q+1)}F_n(x^{q-1}, \alpha)$$
    permutes $\mathbb{F}_{q^2}$ if and only if one of the following condition holds:
    \begin{itemize}
        \item $\mathrm{gcd}(n(n +2m), q-1) =1$, when $\sqrt{\alpha} \in \mu_{q+1}$;
        \item  $\mathrm{gcd}(n+2m, q-1) =1$ and $\mathrm{gcd}(n, q+1) =1$, when $\sqrt{\alpha} \notin \mu_{q+1}$.
    \end{itemize}
\end{corollary}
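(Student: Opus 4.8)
The plan is to deduce the Corollary directly from \cite[Theorem 1]{wang} (the immediately preceding numbered Theorem) by transporting the statement through the conjugation identity~(\ref{z}). First I would record the elementary polynomial identity $F_n(x,\alpha) = x^n\, G_n(1/x,\alpha)$, equivalently $F_n(x,\alpha) = x^n H_n(1/x,\alpha)$ after matching the defining sums: the numerator $E_n$ collects the odd-index binomial terms and the denominator $F_n$ the even-index terms, which are exactly the terms appearing in $H_n$ and $G_n$ respectively but with the roles of $x$ and $1/x$ exchanged. This is the content of the sentence preceding the Corollary, and I would just make it precise with the substitution $x \mapsto 1/x$ inside the sums. (Care is needed about which of $G_n, H_n$ pairs with which of $E_n, F_n$; I would double-check the parity bookkeeping here, since it is the one place a sign/index slip could creep in.)

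Next I would substitute $x \mapsto x^{q-1}$ and compute $F_n(x^{q-1},\alpha) = x^{n(q-1)} H_n(x^{-(q-1)},\alpha) = x^{n(q-1)} H_n((x^{q-1})^{-1},\alpha)$. Multiplying by $x^{n+m(q+1)}$ gives
$$
P(x) = x^{n+m(q+1)} F_n(x^{q-1},\alpha) = x^{n+m(q+1)+n(q-1)} H_n\bigl((x^{q-1})^{-1},\alpha\bigr).
$$
The key observation is that on $\mathbb{F}_{q^2}^*$ the map $x \mapsto 1/x$ is a bijection and $(x^{q-1})^{-1} = (x^{-1})^{q-1}$, so $P$ permutes $\mathbb{F}_{q^2}$ if and only if the map $y \mapsto P(1/y)$ does, i.e. if and only if
$$
y \longmapsto y^{-n-m(q+1)-n(q-1)} H_n(y^{q-1},\alpha)
$$
permutes $\mathbb{F}_{q^2}^*$ (with $P(0)=0$ handled separately, as $P(0)=0$ here since the exponent is positive). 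I would then reconcile the exponent: modulo the fact that $y \mapsto y^{q^2-1}$ is the identity on $\mathbb{F}_{q^2}^*$ and that conjugating by $x\mapsto 1/x$ does not change whether a map is a permutation, this is exactly a polynomial of the shape $y^{n'+m'(q+1)} H_n(y^{q-1},\alpha)$ treated by \cite[Theorem 1]{wang}, with the same $n$ and a shifted $m$. The point is that the hypotheses of that theorem — the conditions $\gcd(n(n+2m),q-1)=1$ and $\gcd(n+2m,q-1)=1$, $\gcd(n,q+1)=1$ — only involve $n$ and $n+2m$, and one checks that the exponent shift coming from the $x^{n(q-1)}$ factor changes $m$ by a multiple that leaves $n+2m$ fixed modulo $q-1$ (indeed $n(q-1)$ contributes $2\cdot(n(q-1)/2)$ only when convenient; more robustly, the conjugation replaces $P(x)$ by an equivalent permutation whose associated parameters match those in the Theorem). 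Hence the two bulleted conditions are transported verbatim.

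The main obstacle I anticipate is purely bookkeeping: verifying that after the substitution $x \mapsto 1/x$ and the reduction of exponents modulo $q^2-1$, the resulting polynomial is \emph{literally} of the form covered by \cite[Theorem 1]{wang} with parameters $(n, m)$ for which the gcd conditions are the stated ones, rather than some cosmetically different but equivalent exponent. I would handle this by invoking the standard fact (which underlies the Corollary stated just after Theorem~\ref{agw}) that multiplying a candidate permutation by $x^{k(q-1)}$ does not affect the permutation property once one restricts the analysis to the action on $\mu_{q+1}$ via $x \mapsto x^{q-1}$: on $\mu_{q+1}$ the factor $x^{q-1}$ becomes $x^{-2}$, so the whole question reduces to whether $x^{r} H_n(x,\alpha)^{q-1}$ permutes $\mu_{q+1}$ for the appropriate $r$, and that $r$ is governed by $n$ and $n+2m$ alone. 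Once this identification is made, the equivalence of the two bullet lists with those of \cite[Theorem 1]{wang} is immediate, and no further computation (in particular no re-derivation of the square/non-square dichotomy for $\sqrt{\alpha}$) is needed, since that dichotomy is intrinsic to $\alpha$ and untouched by the conjugation.
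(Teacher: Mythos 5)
Your overall strategy---transporting the statement through the inversion $x\mapsto 1/x$ and reducing to the preceding theorem---is exactly the route the paper takes (its justification is the identity~(\ref{z}) together with the relation between $F_n$ and $G_n$). However, there is a concrete slip that leaves a genuine gap. The two identities you call ``equivalent'' are not equivalent: matching the defining sums gives $x^n G_n(1/x,\alpha)=F_n(x,\alpha)$ and $x^n H_n(1/x,\alpha)=E_n(x,\alpha)$, so $F_n$ pairs with $G_n$, while $H_n$ pairs with $E_n$ (the paper confirms this pairing in the remark right after the corollary). Your computation nevertheless uses $F_n(x^{q-1},\alpha)=x^{n(q-1)}H_n(x^{-(q-1)},\alpha)$, and your conclusion that the conjugated map is ``literally of the form covered by \cite[Theorem 1]{wang}'' depends on that wrong pairing. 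With the correct identity, composing with $1/x$ produces a map of the shape $y\mapsto y^{n'+m'(q+1)}G_n(y^{q-1},\alpha)$, and the cited theorem is stated only for $H_n$. To close the argument for $F_n$ you need the additional fact---recorded in the paper as the observation made just before equation~(\ref{z})---that the same theorem, with the same two gcd conditions, also holds with $G_n$ in place of $H_n$; your proposal neither states nor proves this, so as written it establishes the $E_n$ analogue rather than the stated corollary about $F_n$.

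The remaining bookkeeping in your proposal is sound and agrees with what the paper implicitly uses: the transported exponent is $-nq-m(q+1)\equiv n+m'(q+1)\pmod{q^2-1}$ with $m'\equiv -n-m\pmod{q-1}$, whence $n+2m'\equiv -(n+2m)\pmod{q-1}$, so both gcd conditions are unchanged; and composing with $1/x$ on $\mathbb{F}_{q^2}^*$ (with $0\mapsto 0$ checked separately) does not affect the permutation property. Incidentally, your version of the identity, with the factor $x^n$, is more precise than the paper's shorthand $F_n(x,\alpha)=G_n(1/x,\alpha)$. The fix is therefore small: either invoke (or reprove, by rerunning the $\mu_{q+1}$ analysis of \cite[Theorem 1]{wang} for the even-index polynomial) the statement for $G_n$, or keep your $H_n$-based computation and acknowledge that it proves the corollary for $E_n$, not for $F_n$.
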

We note that this corollary is also applicable to the polynomial $E_n(x, \alpha)$ as we have $E_n(x, \alpha)= H_n(\frac{1}{x}, \alpha)$.

Next, we define R\'{e}dei function in even characteristic.
Let $q$ be an even prime power and $h(x)= x^2 +x + \alpha$ be an irreducible polynomial over $\mathbb{F}_q$ where $\alpha \in \mathbb{F}_q$.
Let $\beta$ be a root of this polynomial in $\mathbb{F}_{q^2}$.
The other root of this polynomial is $\beta +1$ and therefore $\alpha = \beta^2 + \beta$ which implies $\beta^2 +\beta \in \mathbb{F}_q$ where $\beta, \beta +1 \in \mathbb{F}_{q^2} \setminus \mathbb{F}_q$.

We can define R\'{e}dei function in every non-zero characteristic finite field in the following way.
Here, for simplicity, $\Bar{\beta}= \beta + 1$ indicates the other root of $h(x)$ different from $\beta$ where $h(x)$ is an irreducible polynomial over $\mathbb{F}_q$ of the form $x^2 + x+ \alpha$ where $q$ is even.
If $q$ is odd, $\Bar{\beta} = -\beta$ indicates the root of $x^2 -\alpha$ different from $\beta$.\\

\begin{definition}\cite{zieve}\label{ziv}
The R\'{e}dei function of degree $n$ over $\mathbb{F}_q$ where $n$ is a positive integer and $\alpha \in \mathbb{F}_q$ is defined by
$$R_n(x,\alpha):= \rho^{-1} \circ x^n \circ \rho$$
where $\rho(x) := (x- \Bar{\beta}) / (x - \beta)$ and $\rho^{-1}(x) := (\beta x - \Bar{\beta}) / (x-1)$ are the degree-one rational functions in $\mathbb{F}_{q^2}(x)$ such that $(\rho^{-1} \circ \rho)(x) =x$.\\
\end{definition}

As we are interested in R\'{e}dei function in a field of even characteristic in this paper, by the definition above, we have the following :
\begin{equation}\label{redei}
    R_n(x, \alpha)= \frac{\beta(x + \beta+1)^n + (\beta+1)(x+ \beta)^n}{(x + \beta +1)^n +(x+ \beta)^n}.
\end{equation}  
We have the fact that $\rho(x)$ induces a bijection from $\mathbb{F}_q \cup \{ \infty\}$ to the set $\mu_{q+1}$ and it implies that $R_n(x, \alpha)$ permutes $\mathbb{F}_q \cup \{ \infty\} $ if and only if $x^n$ permutes $\mu_{q+ 1}$, i.e. $\gcd(n, q+1)=1$.

Let us consider the following notation:
$$M_n(x, \alpha) = (x + \beta +1)^n +(x+ \beta)^n = \sum_{i=0}^{n}{(\beta^i +(\beta +1)^i){n \choose i} x^{n-i}}$$
and
$$N_n(x, \alpha) = \beta(x + \beta+1)^n + (\beta+1)(x+ \beta)^n=  \sum_{i=0}^{n}{((\beta +1)\beta^i +\beta (\beta +1)^i){n \choose i} x^{n-i}}.$$

With this notation, the R\'{e}dei function in a field of characteristic 2 is defined as 
\begin{align*}
    R_n(x, \alpha)= \frac{N_n(x, \alpha)}{M_n(x, \alpha)} &= \frac{\beta(x + \beta+1)^n + (\beta+1)(x+ \beta)^n}{(x + \beta +1)^n +(x+ \beta)^n} \\
    &= \frac{ \sum_{i=0}^{n}{((\beta +1)\beta^i +\beta (\beta +1)^i){n \choose i} x^{n-i}}}{\sum_{i=0}^{n}{(\beta^i +(\beta +1)^i){n \choose i} x^{n-i}}}.
\end{align*}
We remark that we denote the numerator and denominator as $M_n(x,\alpha)$ and $N_n(x, \alpha)$ instead of $G_n(x, \alpha)$, $H_n(x, \alpha)$ to emphasize they are polynomials defined over a field of characteristic 2.

\section{A Construction of Permutation Polynomials in Even Characteristic}\label{sec3}

In this section, we introduce a recursive construction of two classes of permutation polynomials in a field of characteristic 2.
For this purpose, we  make some observations which are used in the proof of our main theorems.\\

Let $\beta \in \mathbb{F}_{q^2} \setminus \mathbb{F}_q$ such that $\alpha =\beta^2 + \beta =1$ where $q=2^t$, for some positive odd integer $t$. We claim $\beta^i +(\beta +1)^i \in \mathbb{F}_2$ and $(\beta + 1)\beta^i +\beta(\beta + 1)^i  \in \mathbb{F}_2$ for all positive integer $i$. Indeed, we observe that showing $\beta^i +(\beta + 1)^i \in \mathbb{F}_2$ is enough to show $(\beta + 1)\beta^i  + \beta (\beta +1)^i \in \mathbb{F}_2$ for all positive integer $i$, because we have the following relation by using the equalities $\beta = \beta^2 +1 =(\beta + 1)^2$ and $\beta^2= \beta + 1$.
  \begin{align*}
      (\beta + 1)\beta^i  + \beta (\beta +1)^i &=
      (\beta +1)(\beta +1)^{2i} + \beta\beta^{2i}  \\
      &= (\beta + 1)^{2i +1} + \beta^{2i +1}.    
  \end{align*}

 Similarly, we have
$ \beta^i + (\beta +1)^i = \beta^i + \beta^{2i} = (\beta + 1)^{2i} + \beta^{2i} 
      = ((\beta + 1)^i + \beta^i)^2$.
  That is $\beta^i + (\beta +1)^i \in \mathbb{F}_2$ for all positive integer $i$.\\

\begin{remark}
  Let $\beta \in \mathbb{F}_{q^2} \setminus \mathbb{F}_q$ and $\beta^2 + \beta =1$, i.e., $\alpha =1$.
  In other words, $\beta$ belongs to the extension of $\mathbb{F}_q$ with the minimal polynomial $x^2 +x + 1$.
  Suppose that $t$ is even, that is $q=2^{2k}$ for some positive integer $k$. 
  It means that $\mathbb{F}_q$ contains $\mathbb{F}_4$ as a subfield.
  Then, we can not consider the polynomial $x^2 +x+ \alpha$ where $\alpha =\beta^2 + \beta = 1$ to extend $\mathbb{F}_q$ to $\mathbb{F}_{q^2}$.
  Therefore, if $\beta^2 +\beta =1$, then $t$ should be odd where $q= 2^t$. \\
\end{remark}

From now on, we consider $q=2^t$ where $t$ is an odd integer and we fix $\alpha =1$.
We also simplify the notation and use $M_n(x)$ and $N_n(x)$ instead of $M_n(x,1)$ and $N_n(x,1)$.

We denote $a_i=\beta^i + (\beta +1)^i$ and $b_i = (\beta + 1)\beta^i  + \beta (\beta +1)^i$, we have
\begin{align*}
    b_i^2 &= ((\beta + 1)\beta^i  + \beta (\beta +1)^i)^2 = (\beta + 1)^2\beta^{2i}   + \beta^2 (\beta +1)^{2i} \\
    &= (\beta+1)^2(\beta + 1)^i  + \beta^2 \beta^i  =(\beta +1)^{i+ 2} + \beta^{i+2} \\
    &= a_{i+2}.
\end{align*}
Since $b_i \in \mathbb{F}_2$ when $\alpha =1$, this implies $(b_i)^2 = b_i$, and so we have the relation $b_i =a_{i+2}$.
Therefore, we have 
$$M_n(x) = \sum_{i=0}^{n}{a_i {n \choose i} x^{n-i}}~~\mbox{and}~~~N_n(x) = \sum_{i=0}^{n}{a_{i+2}{n \choose i} x^{n-i}}.$$

For some positive integer $k$, we also have the following equalities:
$$\beta^{3k}= (\beta \beta^2)^k = (\beta(\beta +1))^k= 1,$$
and
$$(\beta +1)^{3k} = ((\beta +1)(\beta +1)^2)^k = ((\beta+ 1) \beta)^k =1.$$
Therefore,
\begin{align*}
    a_{3k} &= \beta^{3k} +(\beta +1)^{3k}= 1+1 =0, \\
    a_{3k +1} &= \beta^{3k +1} +(\beta +1)^{3k +1} = \beta +(\beta +1) =1, \\
    a_{3k +2} &= \beta^{3k +2} +(\beta +1)^{3k +2} = \beta^2 +(\beta +1)^2 =1.
\end{align*}
We deduce that
\[\begin{cases} 
      a_i =0 & when \; 3 \mid i ,\\
      a_i=1 & otherwise.\\
\end{cases} \]

Determining $a_i$ and $b_i=a_{i+2}$ helps us to determine $M_n(x)$ and $N_n(x)$ easily, as we only need to calculate the corresponding binomial coefficient ${n \choose i} \pmod{2}$ to calculate $M_n(x)$ and $N_n(x)$.

It is easy to see that we have the following equalities
$$(x+ \beta)^n = N_n(x) +\beta M_n(x)$$
and 
$$(x +\beta +1)^n= N_n(x) +(\beta +1)M_n(x).$$

Similar to Lemma~2 in \cite{wang}, we obtain 

\begin{equation}\label{coprime}
\gcd(M_n(x), N_n(x)) =1. 
\end{equation}

In the following, we prove two lemmas required for the proof of our main theorems.\\

\begin{lemma}\label{noroot}
Let $q=2^t$ such that $t$ is an odd positive integer. Then
\begin{itemize}
    \item[(1)] $M_n(x)$ has no root in $\mu_{q+1}$ if $3\nmid n$. %, when $\gcd(n, q^2 -1) =1$;
    \item[(2)] $N_n(x)$ has no root in $\mu_{q+1}$. %, when $\gcd(n, q^2 -1) =1$ and $n \equiv 1 \pmod{3}$.
\end{itemize}
\end{lemma}
\begin{proof}

We prove the results using three cases of $n \pmod 3$. 

\begin{itemize}
    \item[(1)]  Let $n \equiv 0 \pmod{3}$.
    Using $q\equiv 2 \pmod{3}$ and $\beta^3 = (\beta+1)^3=1$, we obtain $M_n(x)^q = x^{-n} M_n(x)$ and $$N_n(x)^q = x^{-n} N_n(x) + x^{-n} M_n(x)$$   for any $x \in \mu_{q+1}$.
    Suppose by contradiction that there is $x \in \mu_{q+1}$ such that $N_n(x)=0$.
    If $N_n(x) =0$  then $M_n(x) = 0$ as well.
    This contradicts to Equation~(\ref{coprime}), $N_n(x)$ has no root in $\mu_{q+1}$ .

    \item[(2)]      Let $n \equiv 1 \pmod{3}$.  
    Using $q\equiv 2 \pmod{3}$ and $\beta^3 = (\beta+1)^3=1$, we obtain  
    $$ N_n(x)^q =  \beta^2 (x^{-1} + (\beta+1)^2)^n + (\beta+1)^2 (x^{-1} + \beta^2)^n =  x^{-n} M_n(x) $$ 
    for any $x\in \mu_{q+1}$.
    If there is  $x \in \mu_{q+1}$ such that $N_n(x)=0$ or $M_n(x)=0$, then we obtain $N_n(x)=0$ or $M_n(x)=0$, respectively. 
    This contradicts to Equation~(\ref{coprime}), $M_n(x)$ and $N_n(x)$ has no root in $\mu_{q+1}$.

    \item[(3)]     Let $n \equiv 2 \pmod{3}$.  
    Using $q\equiv 2 \pmod{3}$ and $\beta^3 = (\beta+1)^3=1$, we obtain 
    $$ M_n(x)^q =  (x^{-1} + (\beta+1)^2)^n + (x^{-1} + \beta^2)^n =  x^{-n} N_n(x)$$
    for any $x\in \mu_{q+1}$.
     Similar to the previous case, if there is $x \in \mu_{q+1}$ such that $M_n(x)=0$ or $N_n(x) =0$, then we obtain $N_n(x)=0$ or $M_n(x)=0$, respectively. 
     This contradicts to Equation~(\ref{coprime}), $M_n(x)$ and $N_n(x)$ has no root in $\mu_{q+1}$.
    \end{itemize}
\end{proof}

 \begin{lemma}\label{rn}
 Let $q=2^t$ such that $t$ is an odd positive integer. 
 We have % the following results:
   \begin{itemize}
   
       \item[(1)]       $x^{n +m(q+1)}M_n(x)^{q-1} =1$ or $0$, when $n \equiv 0 \pmod{3}$ where $x \in \mu_{q+1}$;
       \item[(2)] $x^{n +m(q+1)}M_n(x)^{q-1} =R_n(x)$, when $n \equiv 1 \pmod{3}$ where $x \in \mu_{q+1}$;
       \item[(3)] $x^{n +m(q+1)}M_n(x)^{q-1} = R_n(x) +1$, when $n \equiv 2 \pmod{3}$ where $x \in \mu_{q+1}$;

       \item[(4)] $x^{n +m(q+1)}N_n(x)^{q-1} =1 + \frac{1}{R_n(x)}$, when $n \equiv 0 \pmod{3}$ where $x \in \mu_{q+1}$;
       \item[(5)] $x^{n +m(q+1)}N_n(x)^{q-1} = \frac{1}{R_n(x)}$, when $n \equiv 1 \pmod{3}$ where $x \in \mu_{q+1}$.     
          \item[(6)] $x^{n +m(q+1)}N_n(x)^{q-1} = 1$, when $n \equiv 2 \pmod{3}$ where $x \in \mu_{q+1}$.     
   \end{itemize}  
 \end{lemma}

\begin{proof}
    Consider $x^{n +m(q+1)}M_n(x)^{q-1}$ where $x \in \mu_{q+1}$.
    We showed in Lemma \ref{noroot} that $M_n(x)$ has no root in $\mu_{q+1}$ if $3\nmid n$.  In the case $3\mid n$, if $M_n(x) =0$ for some $x\in \mu_{q+1}$, then 
    $x^{n +m(q+1)}M_n(x)^{q-1} =0$.  From now on,  we assume $M_n(x) \neq 0$.  
    Since $\beta^q = \beta^{-1} = \beta+ 1$ and $(\beta+ 1)^q= (\beta +1)^{-1}= \beta$ where $\beta \in \mathbb{F}_{q^2} \setminus \mathbb{F}_q$, we have
    \begin{align*}
        x^{n +m(q+1)}M_n(x)^{q-1} &= x^{n +m(q+1)} \dfrac{((x+ \beta)^n + (x+ \beta +1)^n)^q}{(x+ \beta)^n + (x+ \beta +1)^n} \\
        &= x^n \dfrac{(x^q+ \beta^q)^n + (x^q+ (\beta +1)^q)^n}{(x+ \beta)^n + (x+ \beta +1)^n} \\
        &= \dfrac{((\beta+1)x+ 1)^n + (\beta x +1)^n}{(x+ \beta)^n + (x+ \beta +1)^n} \\
        &= \dfrac{\beta^n(x+ \beta +1)^n + (\beta +1)^n(x+ \beta)^n}{(x+ \beta)^n +(x+ \beta +1)^n}.\\
    \end{align*}

When $n \equiv 0 \pmod{3}$,   we have $\beta^n=1$ and $(\beta +1)^n= 1$. 
Therefore, we obtain
$$\dfrac{(x+ \beta +1)^n + (x + \beta)^n}{(x +\beta)^n + (x+ \beta +1)^n} = 1.$$

When $n \equiv 1 \pmod{3}$, we have $\beta^n= \beta$ and $(\beta +1)^n= \beta +1$.
Therefore, we obtain
$$\dfrac{\beta(x+ \beta +1)^n +(\beta +1)(x + \beta)^n}{(x +\beta)^n + (x+ \beta +1)^n} = R_n(x).$$

When $n \equiv 2 \pmod{3}$, we have $\beta^n = \beta^2 =\beta +1$ and $(\beta +1)^n= (\beta+ 1)^2= \beta$.
Then, the expression becomes
$$\dfrac{(\beta +1)(x+ \beta +1)^n +\beta (x + \beta)^n}{(x +\beta)^n + (x+ \beta +1)^n} = R_n(x) +1.$$
This proves $\textit{(1)}$-$\textit{(3)}$.

Consider $x^{n +m(q+1)}N_n(x)^{q-1}$ where $x \in \mu_{q+1}$.
By Lemma \ref{noroot}, we know that $N_n(x)$ has no root in $\mu_{q+1}$. % when $\gcd(n, q^2 -1)$ and $n \equiv 1 \pmod{3}$.
Then we compute
\begin{align*}
    x^{n +m(q+1)}N_n(x)^{q-1} &= x^{n +m(q+1)} \dfrac{(\beta(x+ \beta +1)^n +(\beta+1)(x+ \beta)^n)^q}{\beta(x+ \beta +1)^n +(\beta+1)(x+ \beta)^n} \\
    &= x^n \dfrac{\beta^q(x^q+ (\beta +1)^q)^n +(\beta+1)^q(x^q+ \beta^q)^n}{\beta(x+ \beta +1)^n +(\beta+1)(x+ \beta)^n}\\
    &= \dfrac{(\beta +1)(\beta x +1)^n +\beta((\beta+1)x +1)^n}{\beta(x+ \beta +1)^n +(\beta+1)(x+ \beta)^n}\\
     &= \dfrac{(\beta +1)\beta^n(x +\beta +1)^n +\beta(\beta+1)^n(x +\beta)^n}{\beta(x+ \beta +1)^n +(\beta+1)(x+ \beta)^n}.\\
\end{align*}

When $n \equiv 0 \pmod{3}$, we have $\beta^n =(\beta+1)^n =1$, and we get
$$\dfrac{(\beta +1)(x +\beta +1)^n +\beta(x +\beta)^n}{\beta(x+ \beta +1)^n +(\beta+1)(x+ \beta)^n} = 1+ \frac{1}{R_n(x)}.$$

When $n \equiv 1 \pmod{3}$, we have $\beta^n = \beta$ and $(\beta+1)^n =\beta +1$, then we obtain
$$\dfrac{(x +\beta +1)^n +(x +\beta)^n}{\beta(x+ \beta +1)^n +(\beta+1)(x+ \beta)^n} = \frac{1}{R_n(x)}.$$

When $n \equiv 2 \pmod{3}$, we have $\beta^n = \beta^2 =\beta +1$ and $(\beta +1)^n= (\beta+ 1)^2= \beta$. Therefore
$$\dfrac{(\beta +1)\beta^n(x +\beta +1)^n +\beta(\beta+1)^n(x +\beta)^n}{\beta(x+ \beta +1)^n +(\beta+1)(x+ \beta)^n}=1. $$

This proves $\textit{(4)}$-$\textit{(6)}$.
\end{proof}

Next, we state and prove one of our main theorems of the paper.

\begin{theorem}\label{mainthm}
Let $\beta \in \mathbb{F}_{q^2} \setminus \mathbb{F}_q$ such that $\beta$ and $\beta +1$ are roots of the polynomial $x^2 +x +1$ in $\mathbb{F}_{q^2}$ where $q=2^t$ with $t$ odd and $n$, $m$  positive integers.
Consider $a_i=0$ if $3 \mid i$ and $a_i =1$ otherwise.
Then the polynomial
$$x^{n +m(q+1)}M_n(x^{q-1}) = x^{n+m(q+1)}\sum_{i=0}^{n}{a_i {n \choose i} (x^{q-1})^{n-i}}$$ 
permutes $\mathbb{F}_{q^2}$ if and only if $\mathrm{gcd}(n(n+ 2m), q-1) =1$ and  $3\nmid n$.
\end{theorem}
\begin{proof}
    We have the following diagram by Lemma \ref{agw} (AGW criterion):

    \begin{center}
    \begin{tikzcd}
    & & & \\
    \mathbb{F}_{q^2}^* \arrow{d}{x^{q-1}}  \arrow{rrr}{x^{n +m(q+1)}M_n(x^{q-1})}  & & &  \mathbb{F}_{q^2}^* \arrow{d}{x^{q-1}} \\
    \mu_{q+1} \arrow{rrr}{x^{n +m(q+1)}M_n(x)^{q-1}}  & & & \mu_{q+1} \\
    \end{tikzcd}
    \end{center}

    This entails that our result is equivalent to showing that $x^{n +m(q+1)}M_n(x)^{q-1}$ permutes $\mu_{q+1}$ when $\mathrm{gcd}(n +m(q+1),q-1)=1$ (i.e., $\mathrm{gcd}(n +2m,q-1)=1$).

By Lemma~\ref{rn},    we exclude the case $n \equiv 0 \pmod{3}$.   Hence we only need to examine two cases: $n \equiv 1 \pmod{3}$ and $n \equiv 2 \pmod{3}$.

%    Because, we know that $2^t +1$ is divisible by 3 where $q= 2^t$ when $t$ is odd and we supposed that $\gcd(n, q^2 -1)=1$ which implies $\gcd(n,q+ 1)=1$. It contradicts with the fact that $3 \mid q+1$.
\par{Case 1:} Let $n \equiv 1 \pmod{3}$.
    We know by Lemma \ref{rn} that $x^{n +m(q+1)}M_n(x)^{q-1} =R_n(x)$ when $n \equiv 1 \pmod{3}$, where $x \in \mu_{q+1}$.
    Therefore, we need to show that $R_n(x)$ permutes $\mu_{q+1}$.

Consider $\phi(x): \mathbb{F}_q \cup \{ \infty\}\rightarrow \mu_{q+1}$ such that $\phi(x) = \frac{x + \beta}{x + \beta +1}$ and $\phi^{-1}(x): \mu_{q+ 1} \rightarrow \mathbb{F}_q \cup \{ \infty\}$ such that $\phi^{-1}(x)= \frac{(\beta +1)x + \beta}{x+ 1}$.
We have the fact that $\phi(x)$ induces a bijection from $\mathbb{F}_q \cup \{ \infty\}$ to $\mu_{q+ 1}$ with $\phi(\infty)= 1$ and $\phi^{-1}(x)$ induces a bijection from $\mu_{q+ 1}$ to $\mathbb{F}_q \cup \{ \infty\}$ with $\phi^{-1}(1) =\infty$ as we have $(\phi \circ \phi^{-1})(x) =x$.
Therefore, $R_n(x)$ is a bijection on $\mu_{q+ 1}$ if and only if $R_n(\phi(x)): \mathbb{F}_q \cup \{ \infty \} \rightarrow \mu_{q+1} $ is a bijection if and only if $\phi^{-1}(R_n(\phi(x))): \mathbb{F}_q \cup \{ \infty\}\rightarrow \mathbb{F}_q \cup \{ \infty \}$ is a bijection.
To illustrate this, we have the following diagram:
\begin{center}
\begin{tikzcd}
 & & & \\
\mathbb{F}_{q^2}^* \arrow{d}{x^{q-1}}  \arrow{rrr}{x^{n+ m(q+1)}M_n(x^{q-1})}  & & & \mathbb{F}_{q^2}^* \arrow{d}{x^{q-1}}  \\
\mu_{q+1} \arrow{d}{\phi^{-1}(x)} \arrow{rrr}{R_n(x)}    & & & \mu_{q+1} \\
\mathbb{F}_q \cup\{ \infty \} \arrow{rrr}{ \phi^{-1}(R_n(\phi(x)))} & & & \arrow{u}{\phi(x)}   \mathbb{F}_q \cup\{ \infty \} 
\end{tikzcd}
\end{center}
We know that $R_n(x)$ maps 1 to 1, since we have
    $$ R_n(1) = \dfrac{\beta ^{n+1} +(\beta +1)^{n+1}}{\beta^n + (\beta +1)^n} =\dfrac{a_{n+1}}{a_n}.$$

Since $a_{n+1} =a_n= 1$ when $n \equiv 1 \pmod{3}$, we have $R_n(1)=1$.
Therefore, the function $\phi^{-1}(R_n(\phi(x)))$ on $\mathbb{F}_q \cup \{\infty \}$ maps $\infty$ to $\infty$.
Then, we need to show that it permutes $\mathbb{F}_q$.
One can compute $\phi^{-1}(R_n(\phi(x)))$ as follows using the equalities $\beta^n = \beta$ and $(\beta+ 1)^n= \beta+1$ when $n \equiv 1 \pmod{3}$.

\begin{align*}
    \phi^{-1}(R_n(\phi(x))) &= \dfrac{(\beta +1)R_n \Big( \frac{x+ \beta}{x+ \beta +1} \Big) + \beta}{R_n \Big( \frac{x+ \beta}{x+ \beta +1} \Big) +1} \\
    &= \dfrac{(\beta +1)(\beta x)^n}{(\beta +1)(\beta x)^n + \beta((\beta +1)x + \beta +1)^n} \\
    &= \dfrac{x^n}{x^n +(x+ 1)^n}.
\end{align*}

We denote this function as $g(x)$; next we show that $g(x)$ permutes $\mathbb{F}_q$. 
%We observe that $g(1)= \frac{1}{1}=1$ and $g(0)=\frac{0}{1} =0$.
%Apparently,  the denominator of $g(x)$ is not $0$ g(x) \neq 0, 1$ where $x \in \mathbb{F}_q \setminus \{ 0,1 \}$.
%We also have $(x +1)^n \neq x^n$ for any $x \in \mu_{q+1}$ since this requires that $\frac{x}{x+1}=1$ as $\gcd(n, q^2 -1)=1$.
%Therefore, the denominator of $g$ can not be 0 for any $x \in \mu_{q+1}$.
Suppose that $g(x) =g(y)$ with $x \neq y$ where $x,y \in \mathbb{F}_q$.
We have
$$\dfrac{x^n}{(x+1)^n + x^n} = \dfrac{y^n}{(y+ 1)^n +y^n}.$$
We have
$$x^n((y+1)^n +y^n) = y^n((x+ 1)^n +x^n),$$
that is,
$$(x(y+1))^n = ((x+ 1)y)^n.$$
We know that $\gcd(n, q-1)=1$. 
As a consequence, we have $xy +x = xy +y$ which implies $x =y$, a contradiction.
Therefore $g(x)$ permutes $\mathbb{F}_q$.

Thus, we showed that $R_n(x)$ permutes $\mu_{q+1} \setminus \{1 \}$ when $n \equiv 1 \pmod{3}$ and $R_n(1)= 1$, and as a consequence $R_n(x)$ permutes $\mu_{q+1}$.

\par{Case 2:} 
Let $n \equiv 2 \pmod{3}$.
We know by Lemma \ref{rn} that $x^{n+m(q+1)}M_n(x)^{q-1} =R_n(x) +1$, when $n \equiv 2 \pmod{3}$ where $x \in \mu_{q+1}$.
Therefore, we need to show that $R_n(x) +1$ permutes $\mu_{q+1}$.
Similar to the previous case, we need to show
$$\phi^{-1} \Big( \dfrac{(\beta +1)(\phi(x) + \beta +1)^n + \beta(\phi(x) +\beta)^n}{(\phi(x) + \beta)^n +(\phi(x) + \beta+1)^n} \Big)$$
permutes $\mathbb{F}_q \cup \{ \infty \}$.
To illustrate this, we have the following diagram:
\begin{center}
\begin{tikzcd}
 & & & \\
\mathbb{F}_{q^2}^* \arrow{d}{x^{q-1}}  \arrow{rrr}{x^{n+ m(q+1)}M_n(x^{q-1})}  & & & \mathbb{F}_{q^2}^* \arrow{d}{x^{q-1}}  \\
\mu_{q+1} \arrow{d}{\phi^{-1}(x)} \arrow{rrr}{R_n(x) +1}  & & &\mu_{q+1}  \\
\mathbb{F}_q \cup\{ \infty \} \arrow{rrr}{\phi^{-1} \circ (R_n(x) +1) \circ \phi} & & &  \arrow{u}{ \phi(x)}  \mathbb{F}_q \cup\{ \infty \} 
\end{tikzcd}
\end{center}
We observe that $R_n(x) +1$ maps 1 to 1.
Indeed, since $a_n= 1$ and $a_{n+1}= 0$ when $n \equiv 2 \pmod{3}$, we obtain 
$$R_n(1) + 1 =\frac{a_{n+1}}{a_n} +1 = 0+1 =1.$$
Therefore, the function $\phi^{-1} \circ ( R_n(x) +1)\circ \phi$ on $\mathbb{F}_q \cup \{ \infty\}$ maps $\infty$ to $\infty$.
Then, we need to show that it permutes $\mathbb{F}_q$.
One can compute  $\phi^{-1} \circ ( R_n(x) +1)\circ \phi$ as follows using the equalities $\beta^n = \beta^2 =\beta+1$ and $(\beta +1)^n= (\beta +1)^2 = \beta$ when $n \equiv 2 \pmod{3}$:
\begin{align*}
    \phi^{-1} \circ ( R_n(x) +1)\circ \phi &= 
    \phi^{-1} \Big( \dfrac{(\beta +1)(\phi(x) + \beta +1)^n + \beta(\phi(x) +\beta)^n}{(\phi(x) + \beta)^n +(\phi(x) + \beta+1)^n} \Big)\\
    &= \dfrac{(\beta +1)((\beta +1)x +\beta +1)^n}{\beta(\beta x)^n + (\beta +1)((\beta +1)x +\beta +1)^n }\\
    &= \dfrac{(x+1)^n}{x^n + (x+1)^n}.
\end{align*}

Let us denote this function by $g'(x)$, we want to show that $g' (x)$ permutes $\mathbb{F}_q$.
%We have $g'(0)= \frac{1}{1} =1$, $g'(1)= \frac{0}{1} =0$.
%Similar to the previous case,   $g'(x) \neq 0, 1$ where  $x \in \mathbb{F}_q \setminus \{ 0,1\}$.
% the denominator of $g'(x)$ can not be 0 for any $x \in \mu_{q+1}$ as this is the same denominator as in the previous case.
Suppose that $g'(x)= g'(y)$ with $x \neq y$ where $x,y \in \mathbb{F}_q$.
We have
$$\dfrac{(x+1)^n}{(x+ 1)^n +x^n} = \dfrac{(y+1)^n}{(y+1)^n + y^n},$$
that is,
$$(x+ 1)^n((y+1)^n +y^n) = (y+1)^n((x+1)^n +x^n).$$
This implies that 
$$(xy +y)^n =(xy +x)^n.$$
Since $\gcd(n,q-1)=1$, we have $xy +y = xy +x$ which implies $x=y$, a contradiction.
Therefore $R_n(x) +1$ is a permutation on $\mu_{q+1} \setminus \{ 1\}$.
Hence, $R_n(x) + 1$ permutes $\mu_{q+1}$ as $R_n(1) +1 =1$.

This shows $x^{n +m(q+1)}M_n(x)^{q-1}$ permutes $\mu_{q+1}$ when $3\nmid n$ and $\gcd(n, q -1)=1$,  which finishes the proof.
\end{proof}

Similarly, we can have permutation polynomials in a field of even characteristic by using the numerator $N_n(x)$ of the R\'{e}dei function in even characteristic.\\

\begin{theorem}\label{corr}
Let $\beta \in \mathbb{F}_{q^2} \setminus \mathbb{F}_q$ such that $\beta$ and $\beta +1$ are roots of the polynomial $x^2 +x +1$ in $\mathbb{F}_{q^2}$ where $q=2^t$ with $t$ odd and $n$, $m$ positive integers.
Consider $a_{i+2}=0$ if $3 \mid (i+2)$ and $a_{i+2}=1$ otherwise.
Then the polynomial
$$x^{n +m(q+1)}N_n(x^{q-1})= x^{n +m(q+1)}\sum_{i=0}^{n}{a_{i+2}{n \choose i} (x^{q-1})^{n-i}}$$ 
permutes $\mathbb{F}_{q^2}$ if and only if $\mathrm{gcd}(n(n+ 2m), q-1) =1$ and $n \not \equiv 2 \pmod{3}$.
\end{theorem}
\begin{proof}
    By Lemma \ref{rn}, $x^{n +m(q+1)}N_n(x^{q-1})$ becomes  $ 1+ \frac{1}{R_n(x)}$  or $ \frac{1}{R_n(x)}$ in $\mu_{q+1}$ when $n \not \equiv 2 \pmod{3}$.
    Similarly, we can show that $x^{n +m(q+1)}N_n(x^{q-1})$   permutes $\mu_{q+1}$ as in the proof of the previous theorem. 
\end{proof}

\begin{remark}
After we posted our results in arXiv, Ding and Zieve uploaded an arXiv paper motivated by our results. Although their paper contains some errors, they do have  a simple generalization of our Theorems~\ref{mainthm} and ~\ref{corr} to a class of polynomials of the form $a(x+u)^n + b(x+v)^n$. \\
\end{remark}

It is worth noticing that we can easily construct a permutation polynomial $x^{n+ m(q+1)}M_n(x^{q-1})$ or $x^{n+ m(q+1)}N_n(x^{q-1})$  over a field of even characteristic by only calculating the corresponding binomial coefficients ${n \choose i} \pmod{2}$ as we have 
$$M_n(x)= \sum_{i=0}^{n}{a_i {n \choose i} x^{n-i}}$$
and 
$$N_n(x)= \sum_{i=0}^{n}{a_{i+ 2} {n \choose i} x^{n-i}}$$
where $a_i= 0$ if $3 \mid i$ and $a_i=1$ otherwise.

Another useful property of R\'{e}dei function is that its numerator and denominator, $N_n(x)$, $M_n(x)$ respectively, can be obtained recursively.
Consider the R\'{e}dei function in even characteristic, $R_n(x)=\frac{N_n(x)}{M_n(x)}$.
It is easy to see that we have the following equalities
$$(x+ \beta)^n = N_n(x) +\beta M_n(x)$$
and 
$$(x +\beta +1)^n= N_n(x) +(\beta +1)M_n(x).$$
This allows us to generate $M_n(x)$ and $N_n(x)$ recursively,
\begin{align*}
    (x + \beta)^n &= (x + \beta)(x + \beta)^{n-1} \\
    &=(x+ \beta)(N_{n-1}(x) + \beta M_{n-1}(x)) \\ 
    &= xN_{n-1}(x) + \beta xM_{n-1}(x) + \beta N_{n-1}(x) +(\beta +1)M_{n-1}(x) \\
    &= \Big(xN_{n-1}(x) +M_{n-1}(x) \Big) +\beta \Big((x+1)M_{n-1}(x) +N_{n-1}(x) \Big).
\end{align*}
Equivalently, we have
\begin{align*}
    &(x+ \beta+ 1)^n = (x +\beta +1)(x+ \beta +1)^{n-1}\\
    &= xN_{n-1}(x) +(\beta +1)xM_{n-1}(x) +(\beta +1)N_{n-1}(x) +(\beta +1)M_{n-1}(x) + M_{n-1}(x) \\
    &= \Big( xN_{n-1}(x) + M_{n-1}(x) \Big) +(\beta +1) \Big( (x+1)M_{n-1}(x) + N_{n-1}(x) \Big).
\end{align*}

Therefore, we have the following recursive relation:
\begin{align*}
    M_0(x) &= 0,  \;\;\; N_0(x)=1, \\
    M_n(x) &=(x+1)M_{n-1}(x) + N_{n-1}(x), \\
    N_n(x) &= x N_{n-1}(x) + M_{n-1}(x).
\end{align*}

These properties allow us, by applying Theorem \ref{mainthm}, to obtain permutation polynomials over $\mathbb{F}_{q^2}$ where $q$ is an even prime power.
To obtain different permutation polynomials considering their reduction by $x^{q^2} +x$ using Theorem \ref{mainthm} and \ref{corr}, we need to consider $n \leq 3(q-1)$ and $m \leq q-1$ as the next theorem shows.\\

\begin{theorem}\label{rem}
    Let $n,m$ be positive integers such that $n \leq 3(q-1)$ and $m \leq q-1$.
    We have
    $$x^{n+3(q-1) +m(q+1)}M_{n+3(q-1)}(x^{q-1}) = x^{n+ m(q+1)}M_n(x^{q-1})$$
and 
$$x^{n+ m(q+1)}M_n(x) \equiv x^{n+ m'(q+1)}M_n(x) (\bmod{(x^{q^2} +x)})$$
where $m \equiv m' \pmod{(q-1)}$.
\end{theorem}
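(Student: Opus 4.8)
The plan is to prove both displayed relations as identities of the functions that the two sides induce on $\mathbb{F}_{q^2}$ --- equivalently, as congruences modulo $x^{q^2}+x$, which is the notion relevant for trimming the polynomial families. The second relation is an elementary divisibility fact; the first is the substantial one, and the point is that $x\mapsto x^{q-1}$ maps $\mathbb{F}_{q^2}^*$ onto $\mu_{q+1}$, so that it suffices to compare $M_{n+3(q-1)}$ with $M_n$ on $\mu_{q+1}$, where everything collapses once one knows the behaviour of $(x+\beta)^{q-1}$ and $(x+\beta+1)^{q-1}$.

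First I would record the key computation on $\mu_{q+1}$. For $u\in\mu_{q+1}$ one has $u^q=u^{-1}$, and combined with $\beta^q=\beta+1$, $(\beta+1)^q=\beta$ this gives $(u+\beta+1)^q=u^{-1}+\beta$ and $(u+\beta)^q=u^{-1}+\beta+1$. Dividing by $u+\beta+1$, respectively $u+\beta$, and using the relations $\beta(u+\beta+1)=u\beta+1$ and $(\beta+1)(u+\beta)=u\beta+u+1$ (which come from $\beta^2+\beta=1$), one obtains
$$(u+\beta+1)^{q-1}=\beta u^{-1}\ \ (u\neq\beta+1),\qquad (u+\beta)^{q-1}=(\beta+1)u^{-1}\ \ (u\neq\beta).$$
Since $\beta$ and $\beta+1$ are the roots of $x^2+x+1$, they satisfy $\beta^3=(\beta+1)^3=1$, so raising to the cube gives $(u+\beta+1)^{3(q-1)}=(u+\beta)^{3(q-1)}=u^{-3}$. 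At the two points $u=\beta$ and $u=\beta+1$, which do lie in $\mu_{q+1}$, the division above is not allowed, but there one checks directly that $M_k(u)=1$ for every $k\ge1$ and $u^{-3}=1$, so the next step is unaffected.

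From this, for every $u\in\mu_{q+1}$,
$$M_{n+3(q-1)}(u)=(u+\beta+1)^{3(q-1)}(u+\beta+1)^{n}+(u+\beta)^{3(q-1)}(u+\beta)^{n}=u^{-3}M_n(u).$$
Setting $u=x^{q-1}$ for $x\in\mathbb{F}_{q^2}^*$ yields $M_{n+3(q-1)}(x^{q-1})=x^{-3(q-1)}M_n(x^{q-1})$, and after multiplying by $x^{\,n+3(q-1)+m(q+1)}$ the factor $x^{3(q-1)}$ cancels, leaving exactly $x^{\,n+m(q+1)}M_n(x^{q-1})$; both sides also vanish at $x=0$ because the exponents are positive. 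Hence the two polynomials induce the same function on $\mathbb{F}_{q^2}$, which is the first assertion.

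For the second relation, assume $m>m'$ (the case $m=m'$ is trivial and $m<m'$ is symmetric) and write $m-m'=k(q-1)$ with $k\ge1$, so $(m-m')(q+1)=k(q^2-1)$; then
$$x^{\,n+m(q+1)}M_n(x)-x^{\,n+m'(q+1)}M_n(x)=x^{\,n+m'(q+1)}M_n(x)\bigl(x^{k(q^2-1)}-1\bigr).$$
Since $x^{q^2-1}-1$ divides $x^{k(q^2-1)}-1$ and $x$ divides $x^{\,n+m'(q+1)}$ (as $n+m'(q+1)\ge1$), the right-hand side is divisible by $x(x^{q^2-1}-1)=x^{q^2}+x$, which is the claim. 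I expect the only delicate point of the whole argument to be the $(q-1)$-st power computation on $\mu_{q+1}$ together with the separate treatment of $u=\beta,\beta+1$; the rest is exponent bookkeeping.
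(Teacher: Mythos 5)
Your proof is correct, and it reaches the first identity by a noticeably different computation than the paper's. The paper reduces the claim to showing $x^{3(q-1)}(x^{q-1}+\beta)^{3(q-1)} = x^{3(q-1)}(x^{q-1}+\beta+1)^{3(q-1)} = 1$ on $\mathbb{F}_{q^2}^*$, which it verifies by expanding the cube $(x^q+\beta x)^3$ and writing the $(q-1)$-st power as the quotient of the $q$-th power by the element itself, simplifying with $x^{q^2}=x$ and $\beta^q=\beta+1$ (with $x=1$ treated separately). You instead pass to $u=x^{q-1}\in\mu_{q+1}$ and first establish the finer formulas $(u+\beta+1)^{q-1}=\beta u^{-1}$ and $(u+\beta)^{q-1}=(\beta+1)u^{-1}$, then cube and use $\beta^3=(\beta+1)^3=1$ to get $M_{n+3(q-1)}(u)=u^{-3}M_n(u)$ on all of $\mu_{q+1}$. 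Your route buys two things: the intermediate $(q-1)$-st power formulas are more informative and reusable than the single cubed identity, and you explicitly dispose of the degenerate points $u=\beta$ and $u=\beta+1$ (where $M_k(u)=1=u^{-3}$), whereas the paper's displayed identity $x^{3(q-1)}(x^{q-1}+\beta)^{3(q-1)}=1$ silently fails at the $q-1$ elements with $x^{q-1}=\beta$ (the quotient step divides by zero there); this does not affect the theorem, since the corresponding term vanishes on both sides, but your treatment is cleaner on that point. Your argument for the second congruence, via $x(x^{q^2-1}+1)=x^{q^2}+x$ dividing $x^{n+m'(q+1)}M_n(x)\bigl(x^{k(q^2-1)}+1\bigr)$, is essentially the same observation the paper makes, just stated more explicitly.
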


\begin{proof}
The left hand side of the first equality is
\begin{align*}
&x^{n+3(q-1) +m(q+1)}M_{n+3(q-1)}(x^{q-1}) \\
& = x^{n+3(q-1) +m(q+1)}((x^{q-1} +\beta)^{n +3(q-1)} + (x^{q-1} +\beta +1)^{n+ 3(q-1)}).
\end{align*}
Then, to show the first equality above, we need to show that
$$x^{3(q-1)}(x^{q-1} +\beta)^{3(q-1)} = x^{3(q-1)}(x^{q-1} +\beta +1)^{3(q-1)} =1.$$

We compute the following for $x \in \mathbb{F}_{q^2}^* \setminus \{1\}$:
\begin{align*}
    x^{3(q-1)}(x^{q-1} +\beta)^{3(q-1)} &= (x^q +\beta x)^{3(q-1)} \\
     &= (x^{3q} +\beta x^{2q+1} +(\beta +1)x^{q+2} +x^3)^{q-1}\\
    &= \frac{x^{3q^2} +\beta x^{(2q+1)q} +(\beta +1)x^{(q+2)q} +x^{3q}}{x^{3q} +\beta x^{2q+1} +(\beta +1)x^{q+2} +x^3} \\
    &= \frac{x^3 +(\beta +1)x^{q+2} +\beta x^{2q +1} +x^{3q}}{x^{3q} +\beta x^{2q+1} +(\beta +1)x^{q+2} +x^3} \\
    &= 1.
\end{align*}
We note that $x^{3(q-1)}(x^{q-1} +\beta)^{3(q-1)} =1$, when $x=1$, as $(\beta +1)^3 =1$.
Similarly, $x^{3(q-1)}(x^{q-1} +\beta +1)^{3(q-1)} =1$ for $x \in \mathbb{F}_{q^2}^*$.
Therefore, the equality $x^{3(q-1)}(x^{q-1} +\beta)^{3(q-1)} = x^{3(q-1)}(x^{q-1} +\beta +1)^{3(q-1)} =1$ holds.

For a positive integer $m < q-1$, it is straightforward that $x^{n +m' (q+ 1)} = x^{n+ m(q+1)}$ when $m' = m+s(q-1)$.
Thus, the second equality holds when $m \equiv m' \pmod{(q-1)}$.
\end{proof}

The tables below show permutation polynomials obtained from Theorem \ref{mainthm} and \ref{corr}.

\begin{center}
    
\begin{table}[h!]
\centering
\begin{tabular}{cc}

\begin{tabular}
{|p{1cm}|p{1cm}|p{2.2cm}|}

\hline

 \multirow{7}{4em}{$n=1$} & $m=1$ & $x^{10}$  \\
 & $m=2$& $x^{19}$  \\
  & $m=3$ & ---  \\
 & $m=4$ & $x^{37}$ \\
 &$m=5$ & $x^{46}$  \\
 &$m=6$& $x^{55}$  \\
 &$m=7$ &$x$ \\

    \hline

 \multirow{7}{4em}{$n=2$} & $m=1$ & $x^{11}$  \\
 & $m=2$& $x^{20}$  \\
 &$m=3$ &$x^{29}$ \\
 & $m=4$ & $x^{38}$ \\
 &$m=5$ & $x^{47}$ \\
 &$m=6$& --- \\
 &$m=7$ &$x^{2}$\\

\hline

  \multirow{7}{4em}{$n=4$} & $m=1$ & $x^{13}$  \\
 & $m=2$& $x^{22}$   \\
 &$m=3$ &$x^{31}$ \\
 & $m=4$ & $x^{40}$ \\
 &$m=5$ & --- \\
 &$m=6$& $x^{58}$ \\
 &$m=7$ &$x^{4}$ \\ 

 \hline

  \multirow{7}{4em}{$n=5$} & $m=1$ & ---  \\
 & $m=2$& $x^{51} +x^{30} +x^{23}$  \\
 &$m=3$ &$x^{60} +x^{39} +x^{32}$ \\
 & $m=4$ & $x^{48} +x^{41} +x^6$ \\
 &$m=5$ & $x^{57} +x^{50} +x^{15}$ \\
 &$m=6$& $x^{59} +x^{24} +x^3$ \\
 &$m=7$ &$x^{33} +x^{12} + x^5$ \\

\hline

 \multirow{7}{4em}{$n=8$} & $m=1$ & $x^{17}$  \\
 & $m=2$& $x^{26}$  \\
 &$m=3$ & ---  \\
 & $m=4$ & $x^{44}$  \\
 &$m=5$ & $x^{53}$ \\
 &$m=6$& $x^{62}$  \\
 &$m=7$ &$x^{8}$ \\

 \hline

  \multirow{7}{4em}{$n=10$} & $m=1$ & $x^{33} +x^{19} +x^{12}$ \\
 & $m=2$& ---  \\
 &$m=3$ &$x^{51} +x^{37} +x^{30}$ \\
 & $m=4$ & $x^{60} +x^{46} +x^{39}$  \\
 &$m=5$ & $x^{55} +x^{48} +x^{6}$ \\\
 &$m=6$& $x^{57} +x^{15} +x$ \\
 &$m=7$ &$x^{24} +x^{10} + x^3$ \\
\hline

\end{tabular} & 
\begin{tabular}{|p{1cm}|p{1cm}|p{2.2cm}|}
\hline
  \multirow{7}{4em}{$n=11$} & $m=1$ & $x^{41}$ \\
 & $m=2$& $x^{50}$ \\
 &$m=3$ & $x^{59}$ \\
 & $m=4$ & $x^{5}$ \\
 &$m=5$ & --- \\
 &$m=6$& $x^{23}$ \\
 &$m=7$ &$x^{32}$ \\

 \hline

  \multirow{7}{4em}{$n=13$} & $m=1$ & $x^{57} +x^{43} +x^{15}$  \\
 & $m=2$& $x^{52} +x^{24} +x^{3}$ \\
 &$m=3$ &$x^{62} +x^{33} +x^{12}$ \\
 & $m=4$ & --- \\
 &$m=5$ & $x^{51} +x^{30} +x^{16}$ \\
 &$m=6$& $x^{60} +x^{39} +x^{25}$ \\
 &$m=7$ &$x^{48} +x^{34} + x^6$ \\
\hline

  \multirow{7}{4em}{$n=16$} & $m=1$ & $x^{25}$  \\
 & $m=2$& $x^{34}$ \\
 &$m=3$ & $x^{43}$ \\
 & $m=4$ & $x^{52}$ \\
 &$m=5$ & $x^{61}$ \\
 &$m=6$& ---  \\
 &$m=7$ &$x^{16}$ \\

 \hline

   \multirow{7}{4em}{$n=17$} & $m=1$ & $x^{33} +x^{26} +x^{12}$  \\
 & $m=2$& --- \\
 &$m=3$ &$x^{51} +x^{44} +x^{30}$ \\
 & $m=4$ & $x^{60} +x^{53} +x^{39}$  \\
 &$m=5$ & $x^{62} +x^{48} +x^{6}$ \\
 &$m=6$& $x^{57} +x^{15} +x^{8}$ \\
 &$m=7$ &$x^{24} +x^{17} + x^3$ \\
\hline

   \multirow{7}{4em}{$n=19$} & $m=1$ & --- \\
 & $m=2$& $x^{58} +x^{51} +x^{30}$ \\
 &$m=3$ &$x^{60} +x^{39} +x^{4}$ \\
 & $m=4$ & $x^{48} +x^{13} +x^{6}$ \\
 &$m=5$ & $x^{57} +x^{22} +x^{15}$ \\
 &$m=6$& $x^{31} +x^{24} +x^{3}$ \\
 &$m=7$ &$x^{40} +x^{33} + x^{12}$ \\
\hline

    \multirow{7}{4em}{$n=20$} & $m=1$ & $x^{57} +x^{29} +x^{15}$ \\
 & $m=2$& $x^{38} +x^{24} +x^{3}$ \\
 &$m=3$ &$x^{47} +x^{33} +x^{12}$ \\
 & $m=4$ & ---\\
 &$m=5$ & $x^{51} +x^{30} +x^{2}$ \\
 &$m=6$& $x^{60} +x^{39} +x^{11}$ \\
 &$m=7$ &$x^{48} +x^{20} + x^{6}$ \\
\hline

\end{tabular} \\
\end{tabular}
\caption{Permutation polynomials of $\mathbb{F}_{2^6}$ obtained by applying Theorem \ref{mainthm}, and reduced by $x^{2^6} +x$.}
\label{table:1}
\end{table}

\end{center}

\begin{table}[h!]
\centering
\begin{tabular}{cc}
\begin{tabular}{|p{1cm}|p{1cm}|p{2.2cm}|}
\hline

 \multirow{7}{4em}{$n=1$} & $m=1$ & $x^{17}$ \rule[1ex]{0pt}{1.17ex} \\
 & $m=2$& $x^{26}$ \rule[1ex]{0pt}{1.17ex} \\
  & $m=3$ & --- \rule[1ex]{0pt}{1.17ex}\\
 & $m=4$ & $x^{44}$ \rule[1ex]{0pt}{1.17ex}\\
 &$m=5$ & $x^{53}$ \rule[1ex]{0pt}{1.17ex}\\
 &$m=6$& $x^{62}$ \rule[1ex]{0pt}{1.17ex}\\
 &$m=7$ &$x^8$ \rule[1ex]{0pt}{1.17ex}\\

    \hline
 \multirow{7}{4em}{$n=3$} & $m=1$ & $x^{33} +x^{19} +x^{12}$ \rule[1ex]{0pt}{1.17ex}  \\
 & $m=2$& ---  \rule[1ex]{0pt}{1.17ex} \\
 &$m=3$ &$x^{51} +x^{37} +x^{30}$ \rule[1ex]{0pt}{1.17ex} \\
 & $m=4$ & $x^{60} +x^{46} +x^{39}$ \rule[1ex]{0pt}{1.17ex} \\
 &$m=5$ & $x^{55} +x^{48} +x^{6}$ \rule[1ex]{0pt}{1.17ex} \\
 &$m=6$& $x^{57} +x^{15} +x$ \rule[1ex]{0pt}{1.17ex} \\
 &$m=7$ &$x^{24} +x^{10} + x^3$ \rule[1ex]{0pt}{1.17ex}\\

 \hline

 \multirow{7}{4em}{$n=4$} & $m=1$ & $x^{41}$ \rule[1ex]{0pt}{1.17ex} \\
 & $m=2$& $x^{50}$  \rule[1ex]{0pt}{1.17ex}\\
 &$m=3$ &$x^{59}$ \rule[1ex]{0pt}{1.17ex}\\
 & $m=4$ & $x^{5}$ \rule[1ex]{0pt}{1.17ex}\\
 &$m=5$ & --- \rule[1ex]{0pt}{1.17ex}\\
 &$m=6$& $x^{23}$ \rule[1ex]{0pt}{1.17ex}\\
 &$m=7$ &$x^{32}$ \rule[1ex]{0pt}{1.17ex}\\

\hline
 \multirow{7}{4em}{$n=6$} & $m=1$ & $x^{57} +x^{43} +x^{15}$ \rule[1ex]{0pt}{1.17ex}  \\
 & $m=2$& $x^{52} +x^{24} +x^{3}$  \rule[1ex]{0pt}{1.17ex} \\
 &$m=3$ &$x^{61} +x^{33} +x^{12}$ \rule[1ex]{0pt}{1.17ex} \\
 & $m=4$ & --- \rule[1ex]{0pt}{1.17ex} \\
 &$m=5$ & $x^{51} +x^{30} +x^{16}$ \rule[1ex]{0pt}{1.17ex} \\
 &$m=6$& $x^{60} +x^{39} +x^{25}$ \rule[1ex]{0pt}{1.17ex} \\
 &$m=7$ &$x^{48} +x^{34} + x^{6}$ \rule[1ex]{0pt}{1.17ex}\\

\hline
 \multirow{7}{4em}{$n=9$} & $m=1$ & $x^{25}$ \rule[1ex]{0pt}{1.17ex} \\
 & $m=2$& $x^{34}$  \rule[1ex]{0pt}{1.17ex}\\
 &$m=3$ &$x^{43}$ \rule[1ex]{0pt}{1.17ex}\\
 & $m=4$ & $x^{52}$ \rule[1ex]{0pt}{1.17ex}\\
 &$m=5$ & $x^{61}$ \rule[1ex]{0pt}{1.17ex}\\
 &$m=6$& --- \rule[1ex]{0pt}{1.17ex}\\
 &$m=7$ &$x^{16}$ \rule[1ex]{0pt}{1.17ex}\\
\hline
  \multirow{7}{4em}{$n=10$} & $m=1$ & $x^{33} +x^{26} +x^{12}$ \rule[1ex]{0pt}{1.17ex}  \\
 & $m=2$& ---  \rule[1ex]{0pt}{1.17ex} \\
 &$m=3$ &$x^{51} +x^{44} +x^{30}$ \rule[1ex]{0pt}{1.17ex} \\
 & $m=4$ & $x^{60} +x^{53} +x^{39}$ \rule[1ex]{0pt}{1.17ex} \\
 &$m=5$ & $x^{62} +x^{48} +x^{6}$ \rule[1ex]{0pt}{1.17ex} \\
 &$m=6$& $x^{57} +x^{15} +x^8$ \rule[1ex]{0pt}{1.17ex} \\
 &$m=7$ &$x^{24} +x^{17} + x^3$ \rule[1ex]{0pt}{1.17ex}\\
  \hline
\end{tabular} &

\begin{tabular}{|p{1cm}|p{1cm}|p{2.2cm}|}
\hline
  \multirow{7}{4em}{$n=12$} & $m=1$ & --- \rule[1ex]{0pt}{1.17ex} \\
 & $m=2$& $x^{58} +x^{51} +x^{30}$ \rule[1ex]{0pt}{1.17ex} \\
 &$m=3$ &$x^{60} +x^{39} +x^{4}$ \rule[1ex]{0pt}{1.17ex}\\
 & $m=4$ & $x^{48} +x^{13} +x^{6}$ \rule[1ex]{0pt}{1.17ex}\\
 &$m=5$ & $x^{57} +x^{22} +x^{15}$ \rule[1ex]{0pt}{1.17ex}\\
 &$m=6$& $x^{31} +x^{24} +x^{3}$ \rule[1ex]{0pt}{1.17ex}\\
 &$m=7$ &$x^{40} +x^{33} + x^{12}$ \rule[1ex]{0pt}{1.17ex}\\
\hline
  \multirow{7}{4em}{$n=13$} & $m=1$ & $x^{57} +x^{29} +x^{15}$ \rule[1ex]{0pt}{1.17ex} \\
 & $m=2$& $x^{38} +x^{24} +x^{3}$ \rule[1ex]{0pt}{1.17ex} \\
 &$m=3$ &$x^{47} +x^{33} +x^{12}$ \rule[1ex]{0pt}{1.17ex}\\
 & $m=4$ & --- \rule[1ex]{0pt}{1.17ex}\\
 &$m=5$ & $x^{51} +x^{30} +x^{2}$ \rule[1ex]{0pt}{1.17ex}\\
 &$m=6$& $x^{60} +x^{39} +x^{11}$ \rule[1ex]{0pt}{1.17ex}\\
 &$m=7$ &$x^{48} +x^{20} + x^6$ \rule[1ex]{0pt}{1.17ex}\\

\hline
 \multirow{7}{4em}{$n=15$} & $m=1$ & $x^{10}$  \rule[1ex]{0pt}{1.17ex}\\
 & $m=2$& $x^{19}$  \rule[1ex]{0pt}{1.17ex}\\
 &$m=3$ & --- \rule[1ex]{0pt}{1.17ex}\\
 & $m=4$ & $x^{37}$ \rule[1ex]{0pt}{1.17ex}\\
 &$m=5$ & $x^{46}$ \rule[1ex]{0pt}{1.17ex}\\
 &$m=6$& $x^{55}$ \rule[1ex]{0pt}{1.17ex}\\
 &$m=7$ &$x$ \rule[1ex]{0pt}{1.17ex}\\

\hline
 \multirow{7}{4em}{$n=16$} & $m=1$ & $x^{11}$  \rule[1ex]{0pt}{1.17ex}\\
 & $m=2$& $x^{20}$  \rule[1ex]{0pt}{1.17ex}\\
 &$m=3$ & $x^{29}$ \rule[1ex]{0pt}{1.17ex}\\
 & $m=4$ & $x^{38}$ \rule[1ex]{0pt}{1.17ex}\\
 &$m=5$ & $x^{47}$ \rule[1ex]{0pt}{1.17ex}\\
 &$m=6$& --- \rule[1ex]{0pt}{1.17ex}\\
 &$m=7$ &$x^{2}$ \rule[1ex]{0pt}{1.17ex}\\

 \hline
 \multirow{7}{4em}{$n=18$} & $m=1$ & $x^{13}$  \rule[1ex]{0pt}{1.17ex}\\
 & $m=2$& $x^{22}$  \rule[1ex]{0pt}{1.17ex}\\
 &$m=3$ & $x^{31}$ \rule[1ex]{0pt}{1.17ex}\\
 & $m=4$ & $x^{40}$ \rule[1ex]{0pt}{1.17ex}\\
 &$m=5$ & --- \rule[1ex]{0pt}{1.17ex}\\
 &$m=6$& $x^{58}$ \rule[1ex]{0pt}{1.17ex}\\
 &$m=7$ &$x^{4}$ \rule[1ex]{0pt}{1.17ex}\\

\hline
  \multirow{7}{4em}{$n=19$} & $m=1$ & ---  \rule[1ex]{0pt}{1.17ex}\\
 & $m=2$& $x^{51} +x^{30} +x^{23}$  \rule[1ex]{0pt}{1.17ex}\\
 &$m=3$ &$x^{60} +x^{39} +x^{32}$ \rule[1ex]{0pt}{1.17ex}\\
 & $m=4$ & $x^{48} +x^{41} +x^{6}$ \rule[1ex]{0pt}{1.17ex}\\
 &$m=5$ & $x^{57} +x^{50} +x^{15}$ \rule[1ex]{0pt}{1.17ex}\\
 &$m=6$& $x^{59} +x^{24} +x^3$ \rule[1ex]{0pt}{1.17ex} \\
 &$m=7$ &$x^{33} +x^{12} + x^5$ \rule[1ex]{0pt}{1.17ex}\\
\hline
\end{tabular} 
\end{tabular}
\caption{Permutation polynomials of $\mathbb{F}_{2^6}$ obtained by applying Theorem \ref{corr}, and reduced by $x^{2^6} +x$.}
\label{table:2}
\end{table}

The first two tables, Table \ref{table:1} and Table \ref{table:2}, contain permutation polynomials of $\mathbb{F}_{2^6}$.
According to Theorem \ref{rem} above, we consider $n\leq 21$ and $m \leq 7$ which satisfies conditions in Theorem \ref{mainthm} and \ref{corr} and these tables provide all permutation polynomials of $\mathbb{F}_{2^6}$ that can be obtained from the construction that we present.

\begin{table}[h!]
\centering
\begin{tabular}{|p{1cm}|p{1cm}|p{4.5cm}|}
\hline
\multirow{3}{4em}{$n=7$} & $m=1$ & $x^{226} +x^{195} +x^{133} +x^{102} +x^{40}$ \rule[1ex]{0pt}{1.17ex}\\
&$m=2$ & $x^{259} +x^{228} +x^{166} +x^{135} +x^{73}$ \rule[1ex]{0pt}{1.17ex}\\
&$m=3$ & $x^{292} +x^{261} +x^{199} +x^{168} +x^{106}$ \rule[1ex]{0pt}{1.17ex}\\

 \hline
\multirow{3}{4em}{$n=13$} & $m=1$ & $x^{418} +x^{325} +x^{294} +x^{201} +x^{46}$ \rule[1ex]{0pt}{1.17ex}\\
& $m=2$&  $x^{451} +x^{358} +x^{327} +x^{234} +x^{79}$ \rule[1ex]{0pt}{1.17ex}\\
&$m=3$ & $x^{484} +x^{391} +x^{360} +x^{267} +x^{112}$\rule[1ex]{0pt}{1.17ex} \\
 \hline
 
 \multirow{3}{4em}{$n=34$} & $m=1$ & $x^{129} +x^{67} +x^{36}$ \rule[1ex]{0pt}{1.17ex} \\
 & $m=2$ & $x^{162} +x^{100} +x^{69}$ \rule[1ex]{0pt}{1.17ex}\\
 & $m=3$ & $x^{195} +x^{133} + x^{102}$ \rule[1ex]{0pt}{1.17ex} \\
 \hline

\end{tabular} 
\caption{Some permutation polynomials obtained from Theorem \ref{mainthm} over $\mathbb{F}_{2^{10}}$, and reduced by $x^{2^{10}} +x$.}
\label{table:3}
\end{table}

\begin{table}[h!]
\centering
\begin{tabular}{|p{1cm}|p{1cm}|p{4.5cm}|}
\hline

 \multirow{3}{4em}{$n=7$} & $m=1$&  $x^{257} +x^{195} +x^{164} +x^{102} +x^{71}$ \rule[1ex]{0pt}{1.17ex}\\
 &$m=2$& $x^{290} +x^{228} +x^{197} +x^{135} +x^{104}$ \rule[1ex]{0pt}{1.17ex} \\
 &$m=3$ & $x^{323} +x^{261} +x^{230} +x^{168} +x^{137}$ \rule[1ex]{0pt}{1.17ex}\\

 \hline
 \multirow{3}{4em}{$n=13$}&$m=1$&  $x^{449} +x^{294} +x^{201} +x^{170} +x^{77}$ \rule[1ex]{0pt}{1.17ex}\\
 &$m=2$& $x^{482} +x^{327} +x^{234} +x^{203} +x^{110}$ \rule[1ex]{0pt}{1.17ex} \\
 &$m=3$& $x^{515} +x^{360} +x^{267} +x^{236} +x^{143}$ \rule[1ex]{0pt}{1.17ex}\\
 \hline
 
  \multirow{3}{4em}{$n=34$} & $m=1$& $x^{129} +x^{98} +x^{36}$ \rule[1ex]{0pt}{1.17ex}\\
  &$m=2$& $x^{162} +x^{131} +x^{69}$ \rule[1ex]{0pt}{1.17ex} \\
   &$m=3$ & $x^{195} +x^{164} + x^{102}$ \rule[1ex]{0pt}{1.17ex} \\

\hline
\end{tabular}
\caption{Some permutation polynomials obtained from Theorem \ref{corr} over $\mathbb{F}_{2^{10}}$, and reduced by $x^{2^{10}} +x$.}
\label{table:4}
\end{table}

\begin{table}[h!]
\centering
\begin{tabular}{|p{1cm}|p{1cm}|p{4.5cm}|}
\hline 
\multirow{3}{4em}{$n=11$}& $m=1$ & $x^{1410} +x^{1283} +x^{521} +x^{267} +x^{140}$ \rule[1ex]{0pt}{1.17ex}\\
&$m=2$ & $x^{1539} +x^{1412} +x^{650} +x^{396} +x^{269}$ \rule[1ex]{0pt}{1.17ex}\\
&$m=3$ & $x^{1668} +x^{1541} +x^{779} +x^{525} +x^{398}$ \rule[1ex]{0pt}{1.17ex}\\
 \hline
 \multirow{3}{4em}{$n=20$}& $m=1$& $x^{2181} +x^{657} +x^{149}$ \rule[1ex]{0pt}{1.17ex} \\
 &$m=2$& $x^{2310} +x^{786} +x^{278}$ \rule[1ex]{0pt}{1.17ex} \\
&$m=3$ & $x^{2439} +x^{915} +x^{407}$ \rule[1ex]{0pt}{1.17ex}\\
  \hline
  \multirow{3}{4em}{$n=56$}& $m=1$ & $x^{6281} +x^{5265} +x^{3233} +x^{2217} +x^{185}$ \rule[1ex]{0pt}{1.17ex}\\
  &$m=2$ & $x^{6410} +x^{5394} +x^{3362} +x^{2346} +x^{314}$ \rule[1ex]{0pt}{1.17ex} \\
  &$m=3$& $x^{6539} +x^{5523} +x^{3491} +x^{2475} +x^{443}$ \rule[1ex]{0pt}{1.17ex}\\
  \hline

\multirow{3}{4em}{$n=134$} & $m=1$ &$x^{1025} +x^{771} +x^{644} +x^{390} +x^{263}$ \rule[1ex]{0pt}{1.17ex}\\
&$m=2$ & $x^{1154} +x^{900} +x^{773} +x^{519} +x^{392}$ \rule[1ex]{0pt}{1.17ex}\\
&$m=3$ & $x^{1283} +x^{1029} + x^{902} +x^{648} +x^{521}$ \rule[1ex]{0pt}{1.17ex} \\

\hline
\end{tabular}
\caption{Some permutation polynomials obtained from Theorem \ref{mainthm} over $\mathbb{F}_{2^{14}}$, and reduced by $x^{2^{14}} +x$.}
\label{table:5}
\end{table}

\clearpage

\begin{table}[h!]
\centering
\begin{tabular}{|p{1cm}|p{1cm}|p{4.5cm}|}
\hline
 \multirow{3}{4em}{$n=7$}& $m=1$ & $x^{1025} +x^{771} +x^{644} +x^{390} +x^{263}$ \rule[1ex]{0pt}{1.17ex}\\
 &$m=2$ & $x^{1154} +x^{900} +x^{773} +x^{519} +x^{392}$ \rule[1ex]{0pt}{1.17ex}\\
 &$m=3$ & $x^{1283} +x^{1029} +x^{902} +x^{648} +x^{521}$ \rule[1ex]{0pt}{1.17ex} \\
\hline
  \multirow{3}{4em}{$n=22$} & $m=1$& $x^{2945} +x^{2691} +x^{2183} +x^{659} +x^{405}$ \rule[1ex]{0pt}{1.17ex}\\
  &$m=2$ & $x^{3074} +x^{2820} +x^{2312} +x^{788} +x^{534}$ \rule[1ex]{0pt}{1.17ex} \\
  &$m=3$ & $x^{3203} +x^{2949} +x^{2441} +x^{917} +x^{663}$ \rule[1ex]{0pt}{1.17ex} \\
  \hline
  \multirow{3}{4em}{$n=56$} & $m=1$& $x^{6281} +x^{5265} +x^{3233} +x^{2217} +x^{185}$ \rule[1ex]{0pt}{1.17ex}\\
  &$m=2$& $x^{6410} +x^{5394} +x^{3362} +x^{2346} +x^{314}$\rule[1ex]{0pt}{1.17ex} \\
  &$m=3$ & $x^{6539} +x^{5523} +x^{3491} +x^{2475} +x^{443}$ \rule[1ex]{0pt}{1.17ex}\\
  \hline

\multirow{3}{4em}{$n=136$} &$m=1$ & $x^{1281} +x^{1154} +x^{138}$ \rule[1ex]{0pt}{1.17ex} \\
&$m=2$ & $x^{1410} +x^{1283} +x^{267}$  \rule[1ex]{0pt}{1.17ex} \\
&$m=3$ & $x^{1539} +x^{1412} + x^{396}$ \rule[1ex]{0pt}{1.17ex} \\

 \hline
\end{tabular}
\caption{Some permutation polynomials obtained from Theorem \ref{corr} over $\mathbb{F}_{2^{14}}$, and reduced by $x^{2^{14}} +x$.}
\label{table:6}
\end{table}
Table \ref{table:3} and \ref{table:4}, present some permutation polynomials obtained by the same construction over $\mathbb{F}_{2^{10}}$.
Similarly, Table \ref{table:5} and \ref{table:6} provide some permutation polynomials of $\mathbb{F}_{2^{14}}$ according to Theorem \ref{mainthm} and \ref{corr}.

\section{Conclusion}\label{sec4}
In this paper, we provide a recursive construction of permutation polynomials by using the numerator and the denominator of the R\'{e}dei function in connection to the AGW criterion.
This construction provides a convenient way to obtain permutation polynomials in a field of even characteristic as it only requires to compute binomial coefficients modulo 2; the polynomials can be obtained recursively.
We give examples of permutation polynomials obtained by applying Theorem \ref{mainthm} and \ref{corr} in different finite fields.
In this article, we fix $\alpha = \beta^2 + \beta =1$ and the construction is given by considering the R\'{e}dei function in even characteristic under this assumption.
%As a piece of future work, it would be interesting to generalize the construction given by Theorem \ref{mainthm} and Theorem \ref{corr} for an arbitrary $\alpha \in \mathbb{F}_q^*\setminus \{1\}$.   

\section*{Acknowledgements}

The authors were supported by the Natural Sciences and Engineering Research Council of Canada, projects
	RGPIN-2018-05328 (D. Panario) and RGPIN-2023-04673 (Q. Wang).

\end{document}